\documentclass[a4paper,leqno]{amsart}

\usepackage{latexsym}
\usepackage[english]{babel}
\usepackage{fancyhdr}
\usepackage[mathscr]{eucal}
\usepackage{amsmath}
\usepackage{mathrsfs}
\usepackage{mathtools}
\usepackage{amsthm}
\usepackage{amsfonts}
\usepackage{amssymb}
\usepackage{amscd}
\usepackage{bbm}
\usepackage{graphicx}
\usepackage{subcaption}
\usepackage{graphics}
\usepackage{latexsym}
\usepackage{color}
\usepackage{enumerate}

\usepackage{pifont}
\usepackage{booktabs} 

\newcommand{\K}[2]{\mathcal{K}\left(#1,#2\right)}

\newcommand{\Kc}[2]{\overline{\K{#1}{#2}}}

\newcommand{\norm}[2]{\left \Vert #1 \right \Vert_{#2}}
\newcommand{\opnorm}[1]{\left \Vert #1 \right \Vert_{\mathrm{op}}}

\newcommand{\ran}{\mathrm{ran}}

\newcommand{\cH}{\mathcal{H}}

\newcommand{\1}{\mathbbm{1}}

\newcommand{\C}{\mathbb{C}}
\newcommand{\N}{\mathbb{N}}

\newcommand{\res}[2]{\mathcal{R}\left(#1, #2\right)} 

\newcommand{\rd}{\mathrm{d}}

\theoremstyle{plain}
\newtheorem{theorem}{Theorem}[section]
\newtheorem{lemma}[theorem]{Lemma}
\newtheorem{corollary}[theorem]{Corollary}
\newtheorem{proposition}[theorem]{Proposition}

\theoremstyle{definition}
\newtheorem{definition}[theorem]{Definition}

\newtheorem{remark}[theorem]{Remark}
\newtheorem{example}[theorem]{Example}

\numberwithin{equation}{section}

\title[Some results on Krylov solvability in Banach space]{Some results on Krylov solvability in Banach space and connections to spectral theory}

\author[N.~Caruso]{No\`{e} Angelo Caruso}
\address[N.~Caruso]{Dipartimento di Scienze Umane, L'Universit\`{a} degli Studi Link \\ Via del Casale di San Pio V, 44\\ 00165 Roma (ITALIA).}
\email{n.caruso@unilink.it}
\thanks{The author wishes to thank the Italian National Institute of Higher Mathematics (INdAM) and the Italian National Group for Mathematical Physics (GNFM) for their support.}
\keywords{Krylov solvability, Krylov subspace, ill-posed problems, inverse linear problems, unbounded operator, resolvent theory, spectral theory, Banach space}

\subjclass[2020]{47A10, 47B01, 46B20, 47A10, 65J22, 65J10}

\begin{document}

\begin{abstract}
This article contains some first steps of a general analysis in Banach space of the problem of Krylov solvability of the inverse linear problem. In contrast to the well-studied Hilbert space setting, the more general Banach space setting presents particular difficulties in creating the connection between Krylov solvability and structural properties of the Krylov subspace itself. At the centre of this is the fact that the closed Krylov subspace may not always have a topological complement. We also develop spectral tools in order to attack the problem using the resolvent operator and exploiting its holomorphic properties on the resolvent set.
\end{abstract}

\maketitle

\section{Introduction}\label{sec:intro}

Krylov subspace methods are a popular family of algorithms used to solve many inverse linear problems numerically (see, for example, the excellent monographs \cite{Liesen-Strakos-2003,Saad-2003_IterativeMethods,Engl-Hanke-Neubauer-1996,Hanke-ConjGrad-1995}). They have been described as one of the top 10 algorithms of the 20th century \cite{Dongarra-Sullivan-Best10-2000}, and have gained so much popularity owing to their speed and ease of implementation.

The inverse linear problem for a closed linear operator $A: X \to X$, posed on a general Banach space $X$, is formulated as follows  
\begin{equation}\label{eq:lin_inv}
Af = g\,,
\end{equation}
where $g \in X$ is a known datum vector, and $f \in X$ is a solution to the problem (if it exists). We say that the problem is: \emph{solvable} if $g \in \ran A$, i.e., there exists a solution $f$ to \eqref{eq:lin_inv}; \emph{well-defined} if, in addition to being solvable, the solution is unique, i.e., the operator $A$ is injective; and \emph{well-posed} if, in addition to being well-defined, the solution $f$ depends continuously on the datum vector $g$.

Krylov subspace methods search for a solution to \eqref{eq:lin_inv} in the closure of distinguished subspace known as the \emph{Krylov subspace} that is defined by
\begin{equation}\label{eg:Ksubspace_def}
\K{A}{g} := \mathrm{span}\{A^k g\,|\, k \in \N_0\}\,.
\end{equation}
We note that for the above definition to make sense, it is necessary that $g$ is in the domain of all positive integer powers of the operator $A$, i.e., $g \in C^{\infty}(A)$ and thus we say that $g$ is $A$-smooth. The advantage of searching for solutions to \eqref{eq:lin_inv} in the closure of the Krylov subspace is that an approximation can be made using finite linear combinations of the vectors $g,Ag,A^2g,\dots$ that are in principle easy to construct. When a solution(s) to the inverse linear problem \eqref{eq:lin_inv} exists in the closure of the Krylov subspace, we say the the inverse problem is \emph{Krylov solvable}. We use the phrase \emph{Krylov solvability} to refer to this phenomenon in a more informal sense.

Over many years investigations have taken place into the properties of Krylov subspace algorithms, particularly their rates of convergence. There has been a very large emphasis on analysing these algorithms in the context of finite dimensional spaces where the Krylov solvability is under full control--see for example the monographs \cite{Liesen-Strakos-2003,Saad-2003_IterativeMethods}. There is also an amount of literature, though less developed, that covers the analysis of these methods in the infinite dimensional Hilbert space setting, for example \cite{Engl-Hanke-Neubauer-1996,Hanke-ConjGrad-1995,Karush-1952,Nemirovskiy-Polyak-1985,Nemirovskiy-Polyak-1985-II,Nemirovski-1986,Daniel-1967,Campbell-Ipsen-1996-I,Campbell-Ipsen-1996-II}. These investigations are primarily concerned with analysing specific algorithms and specific operator types (e.g. self-adjoint, compact, etc). Our previous works \cite{C-KrylovNormal-2022,C-2025-KrylovStruct,CM-Nemi-unbdd-2019,CM-2019_ubddKrylov,CM-krylov-perturbation-2021,CMN-2018_Krylov-solvability-bdd,CM-2025-KrylovReview} as well as our recent monograph \cite{CM-book-Krylov-2022} has sought to expand the analysis in the infinite-dimensional Hilbert space setting and answer what we believe to be a critical question: \emph{under what circumstances does a solution to \eqref{eq:lin_inv} (should at least one exist) exist in the closure of the Krylov subspace $\Kc{A}{g}$?} In the affirmative case, we say that the inverse linear problem is \emph{Krylov solvable}. The advantage in establishing the Krylov solvability (or lack of) is that one may choose the appropriate numerical algorithm to find a solution(s) to \eqref{eq:lin_inv} before actually running it.

Following on from our previous works in the Hilbert space setting, it seems natural to begin the expansion of the analysis to the more general Banach space setting. In contrast to the Hilbert space setting, there is even less literature available regarding Krylov subspaces in Banach space, though some results are available (for example see \cite{GrimmGockler-2017-KrylovC0semigroups}). The Banach space setting also presents challenges that in general arise as a consequence of the lack of a scalar product, and therefore a less well-defined geometry.

This paper is split into two main sections. Section~\ref{sec:structural_props} reconsiders the structural and geometric properties of $\K{A}{g}$ and how they relate to Krylov solvability. In particular there are several examples and counterexamples that demonstrate that the Krylov subspace may not even have topological complement in its embedding Banach space $X$, and moreover that the presence or lack of Krylov solutions is not necessarily dependent on this. This is in stark contrast to the Hilbert space setting as originally revealed in several past works \cite{CM-book-Krylov-2022,CM-2019_ubddKrylov,CMN-2018_Krylov-solvability-bdd,C-2025-KrylovStruct,C-KrylovNormal-2022}. Section~\ref{sec:spectral_approach} then changes perspective of the problem by considering the resolvent formulation. Using resolvents presents particular advantages in terms of dealing with unbounded operators, and avoids structural questions altogether. Several interesting results that exploit a complex-analysis approach to the resolvent are presented therein.

\subsection{Notation}~

Throughout the article various standard notations will be used. We use the symbols $X$ and $\cH$ to refer to abstract Banach and Hilbert spaces respectively, each with respective norms $\norm{\cdot}{X}$ and $\norm{\cdot}{\cH}$. The spaces $\mathscr{B}(X)$ and $\mathscr{C}(X)$ refer to the spaces of operators from the space $X$ to itself that are bounded everywhere defined, and closed, respectively. We equip $\mathscr{B}(X)$ with the standard operator norm $\opnorm{\cdot}$ that makes it additionally a Banach space. Lastly, we use $\mathcal{D}(A)$ to denote the domain of an operator $A$.

\section{Comments on the structural and geometric properties of $\K{A}{g}$}\label{sec:structural_props}

We begin our investigation by re-analysing certain structural concepts of Krylov subspaces in Hilbert space that were first uncovered in the article \cite{CMN-2018_Krylov-solvability-bdd} and later expanded upon in \cite{CM-2019_ubddKrylov,C-2025-KrylovStruct}. Owing to the lack of a scalar product, and the fact that identification of the space $X$ with its dual space $X^*$ is in general not possible, we focus our attention on one particular structural property: the \emph{Krylov intersection}. Before delving into this topic, we first recall some preliminary concepts with regards to topological complements of closed subspaces (see, \cite{Brezis-FA-Sob-PDE,AlbiacKalton-2016_Banachbook}).

\begin{definition}\label{def:top_complement}
Let $X$ be a Banach space and $E \subset X$ be a closed subspace. We say that a subspace $G \subset X$ is a \emph{topological complement} (or simply a \emph{complement}) of the subspace $E$ if 
\begin{itemize}
	\item[(i)] $G$ is closed,
	\item[(ii)] $E \cap G = \{0\}$, and
	\item[(iii)] $E + G = X$.
\end{itemize}

Given $E \subset X$ a closed subspace, if such a subspace $G$ exists then we say that $E$ is a complemented subspace of $X$.
\end{definition}
We note that if $E \subset X$ is a complemented subspace with complement $G$, then every $x \in X$ may be written in the decomposed form $x = z + y$ for $z \in E$ and $y \in G$ unique. Moreover, the mappings $x \xmapsto{P_E} z$ and $x \xmapsto{P_G} y$ are linear, continuous projection operators, with  $\1 - P_E = P_G$ \cite{AlbiacKalton-2016_Banachbook,Brezis-FA-Sob-PDE}.

It is already known that any Banach space $X$ such that \emph{every} closed subspace is a complemented subspace, is actually a Hilbert space \cite{AlbiacKalton-2016_Banachbook}. Clearly, in a general Banach space (non-Hilbert) there exist closed subspaces that admit no complement. A good example of a subspace without complement is the subspace $c_0$ of all sequences that converge to $0$ in $\ell^{\infty}(\N)$ \cite{AlbiacKalton-2016_Banachbook,Whitley-1966mtoc0}.

In our study of the structural properties of the Krylov subspace we shall focus on one specific property, namely the \emph{Krylov intersection}, which was a crucial structural-geometric property in determining the Krylov solvability of a linear inverse problem in the Hilbert space setting. Before defining this property we begin with some background.

We consider $A: X \to X$ as a closed linear operator of domain $\mathcal{D}(A)$ on a Banach space $X$. As the operator is closed, it is well-known that the graph space $V = (\mathcal{D}(A), \norm{\cdot}{A})$ is itself a Banach space, where $\norm{\cdot}{A}$ is the graph norm
\[\Vert x \Vert_A := \Vert x \Vert_X + \Vert Ax \Vert_X\,, \quad \forall \, x \in \mathcal{D}(A)\,.\]
It is clear that when $\mathcal{D}(A) = X$, by the closed graph theorem, $A$ is a bounded, everywhere defined linear operator on $X$, and moreover the graph norm and ambient norms are equivalent.

$A : V \to X$ is always a \emph{bounded} linear operator from the Banach space $V$ to the Banach space $X$. Therefore, for the rest of this section, we generically consider $A : V \to X$ and the inverse linear problem
\begin{equation}\label{eq:lin_inv_V}
A f = g\,,
\end{equation}
for $g \in C^\infty(A)$ a known $A$-smooth vector. When we speak of the \emph{closure} of the Krylov subspace, it can be the closure using the Banach space topology of $X$, denoted $\Kc{A}{g}$; or the closure in $V$, which shall be noted $\Kc{A}{g}^V$. We use this notation to avoid confusion arising over the topology of closure.

\begin{definition}\label{def:Krylov_int}
Let $A: X \to X$ be a closed linear operator on a Banach space $X$ with graph space $V=(\mathcal{D}(A),\norm{\cdot}{A})$, and suppose that $g \in C^\infty(A)$. Furthermore, suppose that $\Kc{A}{g}^V$ is a complemented subspace in $V$ with topological complement $G \subset V$. Then the \emph{Krylov intersection} is defined as follows.
\begin{equation}\label{eq:def_Krylov_int}
\mathcal{I}(A,g) := \Kc{A}{g} \cap A(G)\,.
\end{equation}
\end{definition}

We note that the Krylov intersection has already been extensively discussed in the Hilbert space setting \cite{CMN-2018_Krylov-solvability-bdd,CM-2019_ubddKrylov,C-2025-KrylovStruct}. Below we collect the pertinent results that allow us to establish its connection to the Krylov solvability, suitably modified for the Banach space setting.

First, we begin with a rather obvious result.
\begin{proposition}[Proposition~2.2 \cite{C-2025-KrylovStruct}]\label{prop:KrylovStruct_2.2}
Let $A:X \to X$ be a closed linear operator on the Banach space $X$ and $g \in C^\infty(A)$. Then 
\begin{equation}\label{eq:prop}
A \Kc{A}{g}^V \subset \Kc{A}{g}\,.
\end{equation}
\end{proposition}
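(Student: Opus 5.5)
The plan is to combine two facts: $A$ is bounded as a map from the graph space $V$ into $X$, and $A$ sends the Krylov subspace into itself. Take $x \in \Kc{A}{g}^V$. By definition of closure in $V$, there is a sequence $(x_n)_{n\in\N} \subset \K{A}{g}$ with $\norm{x_n - x}{A} \to 0$. Since $g \in C^\infty(A)$, every element of $\K{A}{g}$ is a finite linear combination $\sum_{k=0}^{N} c_k A^k g$. Applying $A$ gives $A x_n = \sum_{k=0}^{N} c_k A^{k+1} g$, which again lies in $\K{A}{g}$. So $A\K{A}{g} \subset \K{A}{g}$, and in particular $A x_n \in \K{A}{g}$ for every $n$.

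Next, convergence in the graph norm controls convergence of the images. By definition of $\norm{\cdot}{A}$,
\[
\norm{A x_n - A x}{X} \le \norm{x_n - x}{X} + \norm{A(x_n - x)}{X} = \norm{x_n - x}{A} \to 0\,.
\]
This is exactly the boundedness of $A: V \to X$ noted before Definition~\ref{def:Krylov_int}. Hence $A x$ is the $X$-limit of the sequence $(A x_n)$, which lies in $\K{A}{g}$. Therefore $A x \in \Kc{A}{g}$, where the closure is taken in the topology of $X$. Since $x \in \Kc{A}{g}^V$ was arbitrary, \eqref{eq:prop} follows.

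No step here is a genuine obstacle. The only point needing care is to keep the two topologies apart. The closure on the left of \eqref{eq:prop} is taken in $V$, which is what makes $x \in \mathcal{D}(A)$ and $Ax$ well defined. The closure on the right is taken in $X$, which is where the images $A x_n$ converge. No closedness of $A$ beyond the completeness of $V$ is needed, and that completeness is used only implicitly.
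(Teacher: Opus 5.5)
Your proof is correct and is exactly the paper's argument, which likewise deduces the inclusion from the continuity of $A:V\to X$ together with $A\K{A}{g}\subset\K{A}{g}$; you have simply written it out with sequences. One small remark: completeness of $V$ is not used even implicitly, since a closure taken in $V$ lies in $V=\mathcal{D}(A)$ by definition.
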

\begin{proof}
This is a simple consequence of the continuity of $A$ as an operator from $V$ to $X$ as well as the inclusion $A \K{A}{g} \subset \K{A}{g}$.
\end{proof}

\begin{lemma}[Lemma~2.3 \cite{C-2025-KrylovStruct}]\label{lem:KrylovStruct_2.3}
Let $A:X \to X$ be a closed linear operator on Banach space and let $0 \in \rho(A)$. Then, in addition to $A^{-1} \in \mathscr{B}(X)$, we have $A^{-1} \in \mathscr{B}(X,V)$. The converse also holds, i.e., if $A^{-1}$ exists and $A^{-1} \in \mathscr{B}(V,X)$, then $0 \in \rho(A)$.
\end{lemma}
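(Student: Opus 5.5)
The forward direction follows by estimating the graph norm of $A^{-1}x$ directly. Assume $0 \in \rho(A)$, so that $A^{-1}\in\mathscr{B}(X)$ with range $\mathcal{D}(A)$. For $x \in X$ we have $A^{-1}x \in \mathcal{D}(A)$ and
\[
\norm{A^{-1}x}{A} = \norm{A^{-1}x}{X} + \norm{AA^{-1}x}{X} = \norm{A^{-1}x}{X} + \norm{x}{X} \le \left(\opnorm{A^{-1}} + 1\right)\norm{x}{X}\,.
\]
Hence $A^{-1}$ is bounded as a map $X \to V$, that is, $A^{-1}\in\mathscr{B}(X,V)$. This step is routine.

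For the converse I read the hypothesis as saying that $A$ is injective and that its inverse, defined on $\ran A$, is bounded in the relevant topologies and everywhere defined on $X$. The statement's phrasing ``$A^{-1} \in \mathscr{B}(V,X)$'' looks like a transposition of $\mathscr{B}(X,V)$, since the natural converse is: if $A^{-1}$ exists as an everywhere defined bounded map $X\to V$, then $0\in\rho(A)$. Under that reading, $\ran A = X$ and $A$ is injective, so $A : V \to X$ is a bijection. The embedding $V\hookrightarrow X$ is continuous because $\norm{\cdot}{X}\le\norm{\cdot}{A}$. Composing it with $A^{-1}\in\mathscr{B}(X,V)$ gives $A^{-1}\in\mathscr{B}(X)$, so $0\in\rho(A)$ by definition.

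If one instead insists on the literal reading $A^{-1}\in\mathscr{B}(V,X)$, the argument needs a different route. I would interpret it as $A^{-1}$ being everywhere defined, so that $\ran A \supset V = \mathcal{D}(A)$. The real content is then surjectivity, $\ran A = X$. Once that holds, $A:V\to X$ is a bounded bijection between Banach spaces, so the open mapping theorem (closedness of $A$ makes $V$ complete) gives $A^{-1}\in\mathscr{B}(X,V)\subset\mathscr{B}(X)$. Even without assuming boundedness of $A^{-1}$, closedness of $A$ makes $A^{-1}$ closed, and the closed graph theorem then yields boundedness.

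The main obstacle is the correct interpretation of the converse hypothesis. Boundedness of the inverse follows automatically from the closed graph or open mapping theorem, but surjectivity of $A$ must be encoded in the phrase ``$A^{-1}$ exists'' as an everywhere defined operator on $X$. I would state this explicitly and then conclude via the open mapping theorem applied to $A:V\to X$.
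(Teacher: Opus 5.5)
Your proof is correct and matches what the paper relies on. The paper only defers to the Hilbert-space proof in the cited reference, and your argument is the standard one that carries over verbatim: the graph-norm estimate $\norm{A^{-1}x}{A}\leqslant(\opnorm{A^{-1}}+1)\norm{x}{X}$ for the forward direction, and the comparison $\norm{\cdot}{X}\leqslant\norm{\cdot}{A}$ for the converse.

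Your reading of $\mathscr{B}(V,X)$ as a transposition of $\mathscr{B}(X,V)$ is confirmed by how the paper uses the lemma in the proof of Theorem~\ref{th:Kint_Ksolv}(ii). You are also right that surjectivity must be part of the hypothesis. If ``$A^{-1}$ exists'' only means injectivity, the literal converse fails. For example, on $X=\ell^2(\N)$ take $Ae_n=ne_n$ with $\mathcal{D}(A)=\{x\in\overline{\mathrm{span}}\{e_n\}_{n\geqslant 2} : (nx_n)_n\in\ell^2(\N)\}$. This $A$ is closed and injective, with $\mathcal{D}(A)\subset\ran A\neq X$ and $\norm{A^{-1}v}{X}\leqslant\norm{v}{A}$, yet $0\notin\rho(A)$.
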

\begin{proof}
This lemma is a trivially modified \cite[Prop.~2.2]{C-2025-KrylovStruct}, and the proof therein follows with $X$ a Banach space in place of a Hilbert space.
\end{proof}

Similarly, the proof of Proposition~\ref{prop:KrylovStruct_2.4} below carries over from \cite[Prop.~2.4]{C-2025-KrylovStruct}.
\begin{proposition}[Proposition~2.4 \cite{C-2025-KrylovStruct}]\label{prop:KrylovStruct_2.4}
Let $A:X \to X$ be a closed and injective operator on a Banach space $X$, and let $g \in C^\infty(A) \cap \ran A$. Let $f \in \mathcal{D}(A)$ be the solution to $Af = g$.
\begin{itemize}
	\item[(i)] If $f \in \Kc{A}{g}^V$ then $A \Kc{A}{g}^V$ is dense in $\Kc{A}{g}$.
	\item[(ii)] Assume further that $0 \in \rho(A)$. Then $f \in \Kc{A}{g}^V$ if and only if $A\Kc{A}{g}^V$ is dense in $\Kc{A}{g}$.
\end{itemize}
\end{proposition}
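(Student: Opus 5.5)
For part (i) we show that $g$ lies in the $X$-closure of $A\Kc{A}{g}^V$; together with a closure property, this gives density. Assume $f \in \Kc{A}{g}^V$. Then $g = Af \in A\Kc{A}{g}^V$.

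Take any Krylov vector $A^k g$ with $k \ge 1$. It equals $A(A^{k-1}g)$, and $A^{k-1}g \in \K{A}{g} \subset \Kc{A}{g}^V$. So $A\K{A}{g} \subset A\Kc{A}{g}^V$, and adding $g$ gives $\K{A}{g} \subset A\Kc{A}{g}^V$. Since $A\Kc{A}{g}^V$ is a linear subspace, taking $X$-closures yields
\[
\Kc{A}{g} \subset \overline{A\Kc{A}{g}^V}.
\]
Proposition~\ref{prop:KrylovStruct_2.2} supplies the reverse inclusion $A\Kc{A}{g}^V \subset \Kc{A}{g}$. Hence $A\Kc{A}{g}^V$ is dense in $\Kc{A}{g}$, and this step uses no injectivity. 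Part (i) is therefore routine.

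For part (ii), the forward direction is (i), so assume $0\in\rho(A)$ and that $A\Kc{A}{g}^V$ is dense in $\Kc{A}{g}$. Since $g\in\Kc{A}{g}$, density gives a sequence $u_n \in \Kc{A}{g}^V$ with $Au_n \to g$ in $X$. By Lemma~\ref{lem:KrylovStruct_2.3}, $A^{-1}\in\mathscr{B}(X,V)$. Therefore
\[
u_n = A^{-1}Au_n \to A^{-1}g = f \quad \text{in } V.
\]
Because $\Kc{A}{g}^V$ is closed in $V$, we conclude $f\in\Kc{A}{g}^V$.

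The only delicate point is that in (ii) the convergence must take place in the graph norm rather than in $X$. This is exactly what the $V$-valued boundedness of $A^{-1}$ from Lemma~\ref{lem:KrylovStruct_2.3} guarantees. Without $0\in\rho(A)$, one would only obtain approximations $Au_n\to g$ with no control on $u_n$ itself, which is why the converse requires that hypothesis.
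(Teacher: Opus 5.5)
Your proof is correct and follows essentially the argument the paper imports from \cite{C-2025-KrylovStruct}. For (i), $g=Af$ and $A^kg=A(A^{k-1}g)$ all lie in $A\Kc{A}{g}^V$, which together with Proposition~\ref{prop:KrylovStruct_2.2} gives density; for (ii), $A^{-1}\in\mathscr{B}(X,V)$ from Lemma~\ref{lem:KrylovStruct_2.3} upgrades $Au_n\to g$ in $X$ to $u_n\to f$ in $V$, which is exactly the device the paper uses in the proof of Theorem~\ref{th:Kint_Ksolv}(ii), where you correctly write $\mathscr{B}(X,V)$ and the paper's estimate has the typo $\mathscr{B}(V,X)$.
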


We are now in a position to formulate and prove the main result linking Krylov solvability to the Krylov intersection.
\begin{theorem}\label{th:Kint_Ksolv}
Let $A:X \to X$ be a closed linear operator on a Banach space $X$, let $g \in C^\infty(A) \cap \ran A$. Furthermore, suppose that there exists a topological complement in $V$, $G \subset V$, to the closed Krylov subspace $\Kc{A}{g}^V$ (i.e., the Krylov intersection $\mathcal{I}(A,g)$ is indeed defined).
\begin{itemize}
	\item[(i)] If the Krylov intersection $\mathcal{I}(A,g) = \{0\}$ then there exists a solution $f \in \mathcal{D}(A)$ to inverse linear problem $A f = g$ such that $f \in \Kc{A}{g}^V$.
	\item[(ii)] If in addition $0 \in \rho(A)$, then the solution $f$ to the inverse linear problem $Af=g$, $f \in \Kc{A}{g}^V$ if and only if $\mathcal{I}(A,g) = \{0\}$.
\end{itemize}
\end{theorem}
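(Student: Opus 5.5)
The plan is to adapt the Hilbert space argument, with the orthogonal decomposition replaced by the topological complement $G$ of $\Kc{A}{g}^V$ in $V$ and its continuous projections $P_{\mathcal{K}}$ and $P_G = \1 - P_{\mathcal{K}}$ on $V$.

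For (i), take any solution $f_0 \in \mathcal{D}(A)$ of $Af_0 = g$, which exists because $g \in \ran A$. Decompose it in $V$ as $f_0 = z + y$ with $z \in \Kc{A}{g}^V$ and $y \in G$. Applying $A$ gives
\[
g = Az + Ay\,, \qquad\text{so}\qquad Ay = g - Az\,.
\]
By Proposition~\ref{prop:KrylovStruct_2.2}, $Az \in \Kc{A}{g}$, and trivially $g \in \Kc{A}{g}$. Hence $Ay \in \Kc{A}{g} \cap A(G) = \mathcal{I}(A,g) = \{0\}$, so $Ay = 0$. Then $f := z$ satisfies $Af = g$ and $f \in \Kc{A}{g}^V$, which proves (i). Note that injectivity is not needed here: $y$ may lie in $\ker A$, and we simply discard it.

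For (ii), the implication ``$\Leftarrow$'' follows from (i) together with uniqueness of the solution, since $0 \in \rho(A)$ makes $A$ injective. For ``$\Rightarrow$'', assume $f \in \Kc{A}{g}^V$ and let $w \in \mathcal{I}(A,g)$, so $w = Ay$ with $y \in G$ and $w \in \Kc{A}{g}$. The aim is to show $y \in \Kc{A}{g}^V$, which forces $y \in \Kc{A}{g}^V \cap G = \{0\}$ and hence $w = 0$. The tool is Lemma~\ref{lem:KrylovStruct_2.3}: $A^{-1} \in \mathscr{B}(X,V)$. By Proposition~\ref{prop:KrylovStruct_2.4}(ii), $A\Kc{A}{g}^V$ is dense in $\Kc{A}{g}$ in the $X$-topology. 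Choose $u_n \in \Kc{A}{g}^V$ with $Au_n \to w$ in $X$. Then
\[
u_n = A^{-1}(Au_n) \longrightarrow A^{-1}w = y \quad\text{in } V\,,
\]
and since $\Kc{A}{g}^V$ is $V$-closed, $y \in \Kc{A}{g}^V$. This gives $\mathcal{I}(A,g) = \{0\}$.

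A more direct route for ``$\Rightarrow$'' would be to use $A^{-1}A^k g = A^{k-1} g$ for $k \ge 1$ and $A^{-1} g = f$, so that $A^{-1}\K{A}{g} \subset \Kc{A}{g}^V$. Continuity of $A^{-1}$ from $X$ to $V$ then gives $A^{-1}\Kc{A}{g} \subset \Kc{A}{g}^V$ without invoking Proposition~\ref{prop:KrylovStruct_2.4} at all.

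The main point needing care is the topology: the density and closures live in $X$, while the complement $G$ lives in $V$. The key step is therefore the upgrade of $X$-convergence of $Au_n$ to $V$-convergence of $u_n$, which is exactly what $A^{-1} \in \mathscr{B}(X,V)$ supplies. Without $0 \in \rho(A)$ this step fails, which is why the converse in (ii) needs that hypothesis.
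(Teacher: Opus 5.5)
Your proof is correct and follows the paper's argument essentially step for step. In (i) you decompose a solution along $\Kc{A}{g}^V$ and $G$ and eliminate the $G$-component using the trivial Krylov intersection; in (ii) you use the density from Proposition~\ref{prop:KrylovStruct_2.4} together with $A^{-1} \in \mathscr{B}(X,V)$ to show the preimage lies in $\Kc{A}{g}^V \cap G = \{0\}$. Your explicit handling of the `if' direction via uniqueness, and your alternative route (which simply inlines the proof of Proposition~\ref{prop:KrylovStruct_2.4}(i)), are both sound; the paper leaves these points implicit.
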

\begin{proof}
The proof here is a modification of the proof of \cite[Th.~2.6]{C-2025-KrylovStruct}.

We begin with part (i). Suppose that $\mathcal{I}(A,g) = \{0\}$. Due to the fact that $\Kc{A}{g}^V$ is complemented in $V$, there exists two continuous linear projection operators $P_K : V \to V$ and $P_G : V \to V$, the projection onto $\Kc{A}{g}^V$ and its complement $G$ respectively. We note that each $x \in V$ may be written uniquely as $x = P_K x + P_G x$, and therefore that $P_G = \1 - P_K$.

Let $f$ be a solution to $Af=g$. Then $f = P_K f + P_G f$. Thus $Af = AP_K f + AP_G f$ and as $Af = g \in \Kc{A}{g}^V \subset \Kc{A}{g}$ and $A P_K f \in \Kc{A}{g}$ (Proposition~\ref{prop:KrylovStruct_2.2}) we see that $A P_G f \in \Kc{A}{g}$. Yet $A P_G f \in A(G)$, and as $\mathcal{I}(A,g) = \{0\}$ we have that $A P_G f = 0$. Therefore, $A P_K f = g$, and so $\tilde{f} = P_K f \in \Kc{A}{g}^V$ is a Krylov solution to the inverse linear problem.

We now prove the `only if' implication of part (ii). Suppose $0 \in \rho(A)$ and that the solution $f$ to $Af = g$ is such that $f \in \Kc{A}{g}^V$. Let $w \in \mathcal{I}(A,g)$ so that $w \in \Kc{A}{g}$ and there exists a unique vector $v \in G$ such that $Av=w \in A(G)$.

By Proposition~\ref{prop:KrylovStruct_2.4} $A \Kc{A}{g}^V$ is dense in $\Kc{A}{g}$ and therefore there exists a sequence $(x_n)_{n \in \N} \subset \Kc{A}{g}^V$ such that $\Vert Ax_n - w \Vert_X \xrightarrow{n \to +\infty } 0$. Thus
\[\Vert x_n - v \Vert_A = \Vert A^{-1}(Ax_n - w) \Vert_A \leqslant \Vert A^{-1} \Vert_{\mathscr{B}(V,X)} \Vert Ax_n - w \Vert_X \xrightarrow{n \to +\infty} 0\,,\]
where we have used Lemma~\ref{lem:KrylovStruct_2.3}. Thus $v \in \Kc{A}{g}^V$, and as $G$ is a topological complement to $\Kc{A}{g}^V$, we have $\Kc{A}{g}^V \cap G = \{0\}$. Thus $v=0$ implying that $w = 0$.
\end{proof}

\begin{remark}\label{rem:Kint_top_complement}
In the proof of Theorem~\ref{th:Kint_Ksolv} we see that the non-trivial assumption that $\Kc{A}{g}^V$ is complemented in $V$ is crucial in establishing the existence of the continuous projection operators as well as the fact that $\Kc{A}{g}^V \cap G = \{0\}$. This is all taken for granted when $X$ is a Hilbert space, and in fact in the original version of Theorem~\ref{th:Kint_Ksolv} \cite[Theorem~2.6]{C-2025-KrylovStruct} the extra assumption is not even stated.
\end{remark}

\begin{remark}\label{rem:K_reducibility}
As in the study \cite{CMN-2018_Krylov-solvability-bdd}, under the conditions that $A$ is bounded everywhere defined (so that $\norm{\cdot}{A}$ and $\norm{\cdot}{X}$ are equivalent) we have that 
\[A \Kc{A}{g} \subset \Kc{A}{g}\,.\]
Furthermore, if $\Kc{A}{g} \subset X$ is complemented with topological complement $G \subset X$, and if also
\[A(G) \subset G\,,\]
we say that the operator $A$ is $\K{A}{g}$-reduced. In following with \cite{CMN-2018_Krylov-solvability-bdd} the phenomenon of Krylov reducibility gives way to results on the Krylov solvability of the linear inverse problem. Yet, seeing as the $\K{A}{g}$-reducibility of an operator $A$ implies the triviality of the Krylov intersection, we shall not explore further this phenomenon.
\end{remark}

\subsection{The lack of a topological complement to $\Kc{A}{g}^V$}~

The fact that $\Kc{A}{g}^V$ is complemented in $V$ is \emph{not} necessarily a given. Indeed it is a non-trivial assumption, as we shall illustrate with several examples. Yet, we note that such an assumption is not necessary to guarantee the Krylov solvability of an inverse linear problem.

\begin{example}\label{eg:no_Krylov_complement_notSolvable}
We consider as our Banach space $X$ the space of all bounded sequences $\ell^\infty(\N)$. Immediately we have that, as $\ell^\infty(\N)$ is not separable, any Krylov subspace can never be dense in $\ell^\infty(\N)$. It is well-known that $c_0 \subset \ell^\infty(\N)$, the space of all sequences that converge to zero, i.e.
\[c_0 = \{ (\xi_n)_{n \in \N} \in \ell^\infty(\N) \,|\, \xi_n \xrightarrow{n \to +\infty} 0\}\,,\]
is \emph{not} complemented in $\ell^\infty(\N)$ \cite{AlbiacKalton-2016_Banachbook}, and thus there does \emph{not} exist a continuous projection operator $P_0:\ell^\infty(\N) \to \ell^\infty(\N)$ with $P_0(\ell^\infty(\N)) = c_0$.

It is also well-known that $c_{00}$, the space of all sequences with \emph{finite} support, i.e.,
\[c_{00} = \{ (\xi_n)_{n \in \N} \,|\, \xi_n \neq 0 \text{ for finitely many } n\}\,,\]
is \emph{dense} in $c_0$.

Let $A:\ell^\infty(\N) \to \ell^\infty(\N)$ be the forward shift given by the action $e_n \mapsto e_{n+1}$ on the $n$-th canonical vector $e_n$. Thus $A$ is a bounded everywhere defined linear operator on $\ell^\infty(\N)$.

If we suppose that $g = e_1$, then when we consider finite linear combinations of vectors $A^kg$, for $k \in \N_0$, we get $\K{A}{g} = c_{00}$. Therefore $\Kc{A}{g} = c_0 \subset \ell^\infty(\N)$, thus demonstrating the possibility to generate a non complementable closed Krylov subspace. In this case we have that $g \notin \ran A$ which makes equation \eqref{eq:lin_inv} itself not solvable.

As already noted elsewhere on Hilbert space \cite{CMN-2018_Krylov-solvability-bdd,CM-book-Krylov-2022}, it is clear that in the case where $g \in \ran A$, the inverse linear problem $Af = g$ is \emph{not} Krylov solvable anyway. Indeed this applies also for the Banach space $\ell^\infty(\N)$ as for any $g = (\xi_n)_{n \in \N}$ such that $g \in \ran A$, it must be that $\xi_1 = 0$. Suppose, without loss of generality, that $\xi_2 \neq 0$. We have that the unique solution $f$ to \eqref{eq:lin_inv} is
\[f \equiv (\beta_n)_{n \in \N}\,, \quad \beta_n = \xi_{n +1} \quad \forall\, n \in \N\,.\]
It is clear that 
\[\Kc{A}{g} \subset \overline{\mathrm{span}\{e_n \, |\, n \in \N\,, \, n \geqslant 2\}}\,,\]
and thus, as $\beta_1 \neq 0$, we have that $f \notin \Kc{A}{g}$. Indeed, $(\alpha,0,0,\dots) \notin \Kc{A}{g}$ for any $\alpha \neq 0$.
\end{example}

We now note by way of another example that the lack of topological complement to $\Kc{A}{g}$ is not a phenomenon necessarily related to the lack of Krylov solvability of the inverse linear problem, or even the lack of solvability itself.

\begin{example}\label{eg:no_Krylov_complement_Ksolvable}
We now present an example where we have a solvable linear inverse problem that is also Krylov solvable, yet the closed Krylov subspace has no topological complement.

Let $X = \ell^\infty(\N)$ and $A: \ell^\infty(\N) \to \ell^\infty(\N)$ have the action $e_n \mapsto \sigma_n e_n$ for $e_n$ the $n$-th canonical vector. We consider the weights 
\[\sigma_n = \frac{1}{\sqrt{n}}\,,\quad \forall\, n \in \N\,,\]
so that the operator $A$ is continuous and everywhere defined, injective, and we consider the vector 
\[g \equiv \bigg( \frac{1}{n}\bigg)_{n \in \N}\,,\]
so that $g \in \ran A$. The unique solution to the inverse linear problem \eqref{eq:lin_inv} is 
\[f \equiv \bigg(\frac{1}{\sqrt{n}}\bigg)_{n \in \N}\,,\]
and we note that both $g$ and $f$ are in $c_0$. 

We claim that $\Kc{A}{g} = c_0$ and therefore the Krylov subspace has no topological complement, yet the inverse linear problem is Krylov solvable. Indeed, we consider the vector $A^kg$ for $k \in \N_0$,
\[A^kg \equiv \bigg(\frac{1}{n} \cdot \frac{1}{n^{\frac{k}{2}}} \bigg)_{n \in \N} = \bigg(\frac{1}{n^{1+ \frac{k}{2}}}\bigg)_{n \in \N}\,.\]
Therefore the vectors $A^kg$ actually defines the collection of functions on $\N$ 
\[\varphi_k : \N \to \C\,, \quad n \mapsto \frac{1}{n^{1+\frac{k}{2}}}\,,\]
for $k \in \N_0$, and $A^kg \equiv (\varphi_k(n))_{n \in \N}$ for all $k \in \N_0$. Thus we can consider the Krylov subspace $\K{A}{g}$ as being represented by the span of these functions. Working directly with this span of functions
\[\Phi = \mathrm{span}\{\varphi_k \, | \, k \in \N_0\}\,,\]
we note that it satisfies the conditions of the Stone-Weierstrass theorem, i.e., $\Phi$
	\begin{itemize}
		\item[(i)] separates points, i.e., for every two points $n,m \in \N$ there exists a function $\psi \in \Phi$ such that $\psi(n) \neq \psi(m)$ when $n \neq m$, 
		\item[(ii)] vanishes nowhere, i.e., for all $n \in \N$ there exists a function $\psi \in \Phi$ such that $\psi(n) \neq 0$, and
		\item[(iii)] forms an involutive subalgebra of functions on $\N$ that decay to zero, i.e., forms a subalgebra of $c_0$. 
	\end{itemize}
(The involution operation here is clearly the complex conjugate operation.) Indeed, $\Phi$ separates points: consider the function $\varphi_0$, then 
\[\varphi_0(n) = \frac{1}{n}\,, \quad \varphi_0(m) = \frac{1}{m}\,,\]
and it is clear that $\varphi_0(n) \neq \varphi_0(m)$ when $n \neq m$. It is very easy to see that $\Phi$ vanishes nowhere, indeed it is the case that for any $n \in \N$, $\varphi_k(n) \neq 0$ for all $k \in \N$. Lastly, to show that $\Phi$ forms an involutive subalgebra, it is enough to show that the sum and product of two spanning functions are themselves spanning functions. The fact that the spanning functions are real valued already gives us the fact that $\Phi$ is closed under the involution operation. By the definition of the span it is clear that $\alpha \varphi_k + \beta \varphi_l \in \Phi$ for $k,l \in\N$ and for $\alpha,\beta \in \C$. Thus, consider $\tilde{\varphi} = \varphi_k \cdot \varphi_l$. We see that
\[\tilde{\varphi}(n) = \varphi_k(n) \cdot \varphi_l(n) = \frac{1}{n^{2+\frac{k+l}{2}}}\,,\]
and thus 
\[\varphi_{2+k+l}(n) = \frac{1}{n^{1+ \frac{2+k+l}{2}}} = \frac{1}{n^{2+\frac{k+l}{2}}} = \tilde{\varphi}(n)\,,\]
which means that $\varphi_k \cdot \varphi_l \in \Phi$. Therefore, $\Phi$ forms a subalgebra of functions on $\N$ that decay to zero. This implies that $\Phi$ forms an involutive subalgebra of $c_0$ that separates points of $\N$ and vanishes nowhere on $\N$. 

Therefore, by the Stone-Weierstrass theorem for the locally compact space $\N$, we see that the closure of $\Phi$, and thus also $\K{A}{g}$, in the supremum norm is precisely all of $c_0$. Thus, the inverse linear problem is Krylov solvable, and the closed Krylov subspace has no topological complement in $\ell^\infty(\N)$ thereby proving our claim.
\end{example}

\begin{example}\label{eg:no_Krylov_complement_notKsolvable}
In this follow-up of Example~\ref{eg:no_Krylov_complement_Ksolvable} we use the same Banach space $\ell^\infty(\N)$ and operator $A:\ell^\infty(\N) \to \ell^\infty(\N)$,
\[e_n \mapsto \frac{1}{\sqrt{n}} e_n\,.\]
Now we consider the vector $g$ as
\[g \equiv \bigg(\frac{1}{\sqrt{n}}\bigg)_{n \in \N}\,,\]
so that the unique solution to the inverse linear problem $Af=g$ is $f \equiv (1)_{n \in \N}$. By the same reasoning as in Example~\ref{eg:no_Krylov_complement_Ksolvable}, using the Stone-Weierstrass theorem we see that $\Kc{A}{g} = c_0$. As the unique solution is a vector of ones, it clearly is not in $c_0$.

Thus we have an example of a situation in which $\Kc{A}{g}$ has no topological complement and the linear inverse problem, though solvable, is not \emph{Krylov} solvable.
\end{example}

\subsection{The existence of a topological complement to $\Kc{A}{g}^V$}~

We now move on to show some examples where we have the existence of a topological complement to $\Kc{A}{g}$ ($A$ a bounded operator) in a Banach space $X$, that is \emph{not} a Hilbert space. First, we state some preliminary results.

\begin{proposition}\label{prop:weighted_ellp_space}
Consider the space $L^p(\N, \mu)$ for $p \in [1,+\infty)$ and $\mu$ the weighted counting measure given by
\begin{equation}
\mu(\Omega) := \int_\Omega \varphi(n) \, \rd \nu \quad (\text{i.e., } \rd \mu = \varphi \, \rd \nu)\,,
\end{equation}
for $\Omega \subset \N$ Borel, $\nu$ the counting measure on $\N$, and $\varphi$ a strictly positive, $\nu$-measurable bounded function on $\N$ (for example, $\varphi(n) = \exp(-n)$). If $S \subset \N$, then the space
\[M := \big\{ \chi_S(n)f(n) \, | \, f \in L^p(\N,\mu) \big\}\,,\]
is complemented in $L^p(\N,\mu)$, i.e., it is a closed linear subspace with a topological complement $G$.
\end{proposition}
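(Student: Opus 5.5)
The natural candidate for the complement is $G := \{\chi_{S^c}(n) f(n) \,|\, f \in L^p(\N,\mu)\}$, where $S^c = \N\setminus S$. I would work with the multiplication operator $P : L^p(\N,\mu) \to L^p(\N,\mu)$, $(Pf)(n) := \chi_S(n) f(n)$, and show that $M = \ran P$ and $G = \ran(\1 - P)$. Since $\chi_S^2 = \chi_S$, we have $P^2 = P$. Since $0 \leqslant \chi_S \leqslant 1$ pointwise,
\[\norm{Pf}{L^p(\N,\mu)}^p = \sum_{n \in S} |f(n)|^p \varphi(n) \leqslant \norm{f}{L^p(\N,\mu)}^p\,,\]
so $P \in \mathscr{B}(L^p(\N,\mu))$ with $\opnorm{P} \leqslant 1$. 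The same estimate applies to $\1 - P$, which is multiplication by $\chi_{S^c}$.

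Next I would verify conditions (i)--(iii) of Definition~\ref{def:top_complement} for $E = M$ and $G$ as above.

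\begin{itemize}
\item[(iii)] Every $f$ splits as $f = \chi_S f + \chi_{S^c} f$, so $M + G = L^p(\N,\mu)$.
\item[(ii)] If $h \in M \cap G$, then $h$ vanishes on $S^c$ (being of the form $\chi_S f$) and on $S$ (being of the form $\chi_{S^c} f'$). Hence $h = 0$, since functions on $\N$ are determined pointwise; $\mu$ has no nonempty null sets because $\varphi > 0$.
\item[Closedness.] Note that $M = \ker(\1 - P)$ and $G = \ker P$. Indeed, $h \in M$ if and only if $h = \chi_S h$, i.e. $(\1 - P)h = 0$, and similarly for $G$. As kernels of bounded operators, both are closed, which gives (i) and also shows that $M$ is a closed linear subspace.
\end{itemize}

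The only point requiring a little care is that everything is pointwise on $\N$. Because $\varphi$ is strictly positive, equality $\mu$-a.e. coincides with equality at every $n$, so the identifications above are legitimate. The boundedness of $\varphi$ is not really needed for this argument; it merely guarantees that $L^p(\N,\mu)$ contains $\ell^\infty$-type examples. I expect no substantial obstacle, and the whole proof reduces to observing that $P$ is a bounded idempotent.
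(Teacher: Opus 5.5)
Your proof is correct. It uses the same complement $G=\{\chi_{S^c}f\}$ and the same splitting $f=\chi_S f+\chi_{S^c}f$ as the paper; the real difference is how closedness is obtained. The paper argues sequentially: it takes a sequence in $M$ converging in $L^p(\N,\mu)$, extracts a pointwise convergent subsequence, notes that the limit vanishes off $S$, and repeats this for $G$. The continuous projections $f\mapsto\chi_S f$ and $f\mapsto\chi_{S^c}f$ only appear afterwards, in Remark~\ref{rem:prop_weighted_ellp}. You instead start from the bounded idempotent $P$ and read off $M=\ker(\1-P)$ and $G=\ker P$. Closedness and continuity of the projections then come together, and even (ii) becomes purely algebraic: $h\in\ker P\cap\ker(\1-P)$ gives $h=Ph=0$. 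This is the standard fact that the range of a bounded idempotent is complemented by its kernel. It is shorter, and it works verbatim for multiplication by $\chi_S$ on any $L^p$ space, $1\le p\le\infty$, using neither the positivity nor the boundedness of $\varphi$. One side remark in your last paragraph is inaccurate, though it does not affect the proof. Boundedness of $\varphi$ gives $\ell^p(\N)\subset L^p(\N,\mu)$, not bounded sequences. To have $\ell^\infty(\N)\subset L^p(\N,\mu)$ one needs $\sum_n\varphi(n)<+\infty$, i.e.\ $\mu(\N)<+\infty$, which is what Example~\ref{eg:Krylov_solvable_topological_complement} actually relies on.
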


\begin{remark}\label{rem:prop_weighted_ellp_space}
We note that the space $L^p(\N,\mu)$ is actually an $\ell^p(\N)$ space with the weighted norm
\[\norm{x}{L^p} = \bigg(\sum_{n \in \N} \varphi(n) \vert x_n \vert^p\bigg)^{\frac{1}{p}}\,.\]
Of course for $\varphi \equiv 1$ we recover the traditional $\ell^p(\N)$ space. 
\end{remark}

\begin{proof}[Proof of Proposition~\ref{prop:weighted_ellp_space}]
Let $S' = \N \setminus S$, and we know that $\nu$ and $\mu$ are positive Borel regular, $\sigma$-finite measures. First we shall show that $M$ is linear. Indeed, let $f$ and $g$ be functions in $M$, and $\alpha,\beta \in \C$, so that $\chi_S f = f$, $\chi_S g = g$,u and we see that $\alpha f + \beta g = (\alpha f+\beta g)\chi_S \in M$.

Secondly we show that $M$ is closed. Indeed, suppose that $(f_m)_{m \in \N}$ is a Cauchy sequence in $M$, i.e., it converges to some $f \in L^p(\N,\mu)$. Thus, in particular $f_m = \chi_S f_m \in M$ for all $m \in \N$, and so
\[\int_\N \vert \chi_S(n)f_m(n) - f(n) \vert^p \, \rd \mu \xrightarrow{m \to +\infty} 0\,.\]
Therefore there exists a subsequence $(\chi_S(n)f_{m_k}(n))_{k\in\N}$ that converges pointwise a.e. (and thus everywhere on $\N$) to $f(n)$, i.e.,
\[\chi_S(n)f_{m_k}(n) \xrightarrow{k \to +\infty} f(n) \,, \quad \forall\,n \in \N\,.\]
It is clear that $f(n) = 0$ for all $n \in S'$, and so $f = \chi_S f$ from which we see that $f \in M$. Therefore $M$ is closed.

Now we consider the space 
\[G = \big\{\chi_{S'} f \, | \, f \in L^p(\N,\mu) \big\}\,,\]
that we see is also linear and closed by the arguments above. Let $f \in M \cap G$, so that
\[\chi_S(n) f(n) = f(n) = \chi_{S'}(n)f(n)\,, \quad \forall \, n \in \N\,,\]
which implies that $f \equiv 0$ as $S \cap S' = \emptyset$. Thus $M \cap G = \{0\}$. 

As $\chi_S + \chi_{S'} \equiv 1$, for any $f \in L^p(\N,\mu)$, we have that 
\[f = \chi_S f + \chi_{S'} f\,,\]
and this implies that $L^p(\N,\mu) \subset M + G$. The reverse inclusion is obvious, and therefore $M + G = L^p(\N,\mu)$.

Thus, by the definition of topological complement, we see that $M$ is indeed complemented in $L^p(\N, \mu)$ and that $G$ is its topological complement.
\end{proof}

\begin{remark}\label{rem:prop_weighted_ellp}
We note that under the conditions of Proposition~\ref{prop:weighted_ellp_space} the projection operators $P_M$ and $P_G$ are precisely the multiplication by the characteristic functions $\chi_S$ and $\chi_{S'}$ respectively, i.e.,
\[P_M:L^p(\N,\mu) \to L^p(\N,\mu)\,, \quad f \mapsto \chi_S f\,,\]
and similarly for $P_G$. These are clearly idempotent, bounded, everywhere defined operators on $L^p(\N,\mu)$, and indeed do define their respective projections onto $M$ and $G$.
\end{remark}

\begin{example}\label{eg:Krylov_solvable_topological_complement}
We consider the bounded, everywhere defined injective operator $A:L^p(\N,\mu) \to L^p(\N,\mu)$, $f(n) \mapsto \frac{1}{n}f(n)$, $p\in[1,+\infty)$ for the space $L^p(\N,\mu)$ as in Proposition~\ref{prop:weighted_ellp_space} for $\varphi(n) = \exp(-n)$. In this case we therefore have $\mu(\N) < + \infty$.

Let 
\[f(n) = \begin{cases} 1\,, & n \text{ even} \\ 0\,, & n \text{ odd} \end{cases}\,,\]
so that $f \in L^p(\N,\mu)$ and moreover for $S = \{2n \,|\, n \in \N\}$ we have that $f \in M$ (for $M$ as defined in Proposition~\ref{prop:weighted_ellp_space}). Then the vector $g \in \ran A$ such that $Af = g$ is also in $M$, where
\[g(n) = \begin{cases} \frac{1}{n}\,, & n \text{ even} \\ 0\,, & n \text{ odd} \end{cases}\,.\]
Now we consider the function $(A^kg)(n)$ for $k \in \N_0$,
\[(A^kg)(n) = \begin{cases} \frac{1}{n^{1+k}}\,, & n \text{ even} \\ 0\,, & n \text{ odd} \end{cases}\,, \quad \forall \, k \in \N_0\,,\]
so that $A^kg \in M$ as well. Thus it is clear that $\Kc{A}{g} \subset M$, and we claim that $\K{A}{g}$ is itself \emph{dense} in $M$. Indeed, we claim that the functions in $\K{A}{g}$ satisfy the conditions of the Stone-Weierstrass theorem on $S$, i.e., they
\begin{itemize}
		\item[(i)] separate points, i.e., for every two points $n,m \in S$ there exists a function $h \in \K{A}{g}$ such that $h(n) \neq h(m)$ when $n \neq m$, 
		\item[(ii)] vanish nowhere, i.e., for all $n \in S$ there exists a function $h \in \K{A}{g}$ such that $h(n) \neq 0$, and
		\item[(iii)] form an involutive subalgebra of functions on $S$ that decay to zero.
	\end{itemize}
Indeed, the function $g$ itself separates points on $S$ thus proving condition (i), and it also satisfies condition (ii). Thus it remains only to show condition (iii), for which it is necessary only to show that $(A^kg)(n) \cdot (A^mg)(n) \in \K{A}{g}$, as certainly all (finite) linear combinations of $(A^kg)(n)$ are in the Krylov subspace $\K{A}{g}$. In fact
\begin{align*}
(A^kg)(n)\cdot(A^mg)(n) & = \begin{cases}\frac{1}{n^{2+k+m}}\,, & n \text{ even} \\ 0 \,, & n \text{ odd} \end{cases}\,,\\
 & = (A^{1+k+m}g)(n) \in \K{A}{g}\,.
\end{align*}
Therefore $\K{A}{g}$ is an involutive subalgebra of $M$. Applying the Stone-Weierstrass theorem, the closure of $\K{A}{g}$ in the \emph{supremum} norm is the set of all bounded functions with support on $S$ that vanish at infinity, i.e., $C_0^S(\N)$. It is clear that $C_0^S(\N) \subset M$. We may re-write $M$ as the space
\[M = L^p(\N, \mu')\,, \quad \rd\mu' = \chi_S \,\rd \mu\,,\] 
so that by standard approximation theorems (e.g., \cite[Th.~3.14]{Rudin-realcomplexanalysis}) $C_0^S(\N)$ is dense in $L^p(\N,\mu')$, i.e., it is dense in $M$. This implies that $\K{A}{g}$ is dense in $M$ in the $L^p(\N,\mu)$ norm. Indeed, given any function $\psi \in M = L^p(\N,\mu') \subset L^p(\N,\mu)$, for $\varepsilon > 0$ we may approximate it with a function $\phi_\varepsilon \in C_0^S(\N)$ in supremum norm so that
\[\Vert \psi - \phi_\varepsilon \Vert_{L^p(\mu)} < \varepsilon\,,\]
and therefore, given that $\K{A}{g}$ is dense in $C_0^S(\N)$ in supremum norm, there exists a polynomial $p_\varepsilon$ such that $\Vert p_\varepsilon(A)g - \phi \Vert_\infty < \varepsilon$. As such
\begin{align*}
\Vert p_\varepsilon(A)g - \psi \Vert_{L^p(\mu)} & \leqslant \Vert p_\varepsilon(A)g - \phi_\varepsilon \Vert_{L^p(\mu)} + \Vert \phi_\varepsilon - \psi \Vert_{L^p(\mu)} \,, \\
 & < \Vert p_\varepsilon(A)g - \phi_\varepsilon \Vert_{L^p(\mu)} + \varepsilon \,, \\
 & = \bigg( \int_\N \vert p_\varepsilon(A)g - \phi_\varepsilon \vert^p \, \rd \mu \bigg)^{\frac{1}{p}} + \varepsilon \,, \\
 & \leqslant \Vert p_\varepsilon(A)g - \phi_\varepsilon \Vert_\infty \mu(\N)^{\frac{1}{p}} + \varepsilon \,,\\
 & < (1+ \mu(\N)^{\frac{1}{p}})\varepsilon\,.
\end{align*} 
As $\mu(\N) < +\infty$ the above inequality vanishes for $\varepsilon \to 0$, and thus $\Kc{A}{g} = M$. As such $\Kc{A}{g}$ has a topological complement (Proposition~\ref{prop:weighted_ellp_space}) and as $f \in M$ we have that the inverse linear problem is also Krylov solvable.
\end{example}

\begin{remark}
In fact, an additional result of Example~\ref{eg:Krylov_solvable_topological_complement} is that the operator $A$ is actually $\K{A}{g}$-reduced. For the topological complement $G$ to $\Kc{A}{g}$ (as defined in the proof of Proposition~\ref{prop:weighted_ellp_space}), it so happens that $A(G) \subset G$. Indeed, for $f \in G$ we have that $\chi_{S'} f = f$ so that $(Af)(n) = \frac{1}{n} \chi_{S'}(n)f(n) = \chi_{S'}(n)\frac{1}{n}f(n) \in G$. Thus the phenomenon of Krylov reducibility is not something unique to the Hilbert space setting.
\end{remark}

\begin{example}\label{eg:non_Krylov_solvable_topological_complement}
We may now consider another bounded, everywhere defined, injective operator $A:\ell^p(\N) \to \ell^p(\N)$, $e_n \mapsto e_{n+2}$ (another type of forward shift), $p\in[1,+\infty)$. We note that from Remark~\ref{rem:prop_weighted_ellp_space} $\ell^p(\N)$ is the same space as $L^p(\N,\mu)$ from Proposition~\ref{prop:weighted_ellp_space} for $\varphi \equiv 1$. Therefore, the space $M = \overline{\mathrm{span}\{e_{2n} \, | \, n \in \N\,, n \geqslant 2\}}$ has topological complement 
\[G = \overline{\mathrm{span}\{e_{2n+1}\,|\,n \geqslant 0\} + \mathrm{span}\{e_2\}}\,.\]

Let $f = e_2$, so that we generate the inverse linear problem $Af = g = e_4$ where we see that 
\[\K{A}{g} = \mathrm{span}\{e_{2n} \, | \, n \in \N\,, n \geqslant 2\}\,,\]
which is dense in $M$. Thus by Proposition~\ref{prop:weighted_ellp_space} $\Kc{A}{g}$ has topological complement $G$.

It is evident that $f \notin \Kc{A}{g}$ and therefore, though we have the existence of a topological complement of $\Kc{A}{g}$, this inverse linear problem is \emph{not} Krylov solvable.
\end{example}

\begin{remark}\label{rem:Krylov_complement}
The previous examples~\ref{eg:no_Krylov_complement_notSolvable}, \ref{eg:no_Krylov_complement_notKsolvable}, \ref{eg:no_Krylov_complement_Ksolvable}, \ref{eg:Krylov_solvable_topological_complement}, and \ref{eg:non_Krylov_solvable_topological_complement} together show that, in general, there is no relation between the Krylov solvability of an inverse linear problem and the existence of a topological complement to the closed Krylov subspace. Therefore the assumption of the existence of a topological complement is indeed a necessary one to show Krylov solvability by the purely projective/geometric techniques in this section. Moreover the absence of a topological complement to the closed Krylov subspace does not imply that the inverse linear problem is not Krylov solvable. Indeed in these cases we must switch to other tools available, namely spectral and perturbative.
\end{remark}

\section{A spectral approach to the problem}\label{sec:spectral_approach}
Spectral methods provide a powerful alternative to attacking the Krylov solvability problem. They are not subject to the extra assumptions that are needed for structural methods, namely the existence of a topological complement to $\Kc{A}{g}$. Indeed, spectral techniques relating to Krylov solvability have been investigated elsewhere \cite{CMN-2018_Krylov-solvability-bdd,C-2025-KrylovStruct,CM-Nemi-unbdd-2019,CM-book-Krylov-2022} where they have proven to be very useful.

In this section rather than use directly the Dunford-Schwartz functional calculus for bounded operators in establishing Krylov solvability, we choose to look at a different facet by considering the resolvent set and resolvent operator. This formulation has the advantage in that it encompasses the whole class of closed operators on a Banach space $X$, and moreover may prove to be useful to analyse Krylov solvability under the effects of perturbations.

We begin by listing the main results and their corollaries, along with appropriate remarks.

\begin{theorem}\label{th:poly_resolvent_opnorm}
Let $A:X \to X$ be an everywhere defined and bounded operator, and let $\mathbb{P}(A)$ be the set of operators
\[\mathbb{P}(A) := \{ p(A) \,|\,p \text{ a polynomial}\} \subset \mathscr{B}(X)\,.\]
Consider any $\zeta_0 > \mathrm{spr} A$. Then the resolvent operator $\res{\zeta_0}{A}$ is such that
\[\res{\zeta_0}{A} \in \overline{\mathbb{P}(A)}^{\opnorm{\cdot}}\,.\]
Moreover, $\res{\zeta}{A} \in \overline{\mathbb{P}(A)}^{\opnorm{\cdot}}$ for all $\zeta \in \rho(A,\zeta_0)$, where $\rho(A,\zeta_0)$ is the connected part of $\rho(A)$ containing $\zeta_0$.
\end{theorem}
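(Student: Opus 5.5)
The plan is to combine a Neumann series argument with a connectedness argument on the resolvent set. Throughout I write $\mathcal{A} := \overline{\mathbb{P}(A)}^{\opnorm{\cdot}}$ and use the key structural fact that $\mathcal{A}$ is a closed unital subalgebra of $\mathscr{B}(X)$. It is a linear subspace, and it is closed under products: if $p_n(A)\to S$ and $q_n(A)\to T$ in operator norm, then $p_n(A)q_n(A) = (p_nq_n)(A) \to ST$, by joint continuity of multiplication in $\mathscr{B}(X)$. In particular $\mathcal{A}$ is a Banach algebra containing $\1$.

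\emph{Step 1 (the point $\zeta_0$).} Since $\zeta_0 > \mathrm{spr} A$, Gelfand's formula gives $\limsup_k \opnorm{A^k}^{1/k} = \mathrm{spr}A < |\zeta_0|$. Hence the Neumann series
\[\sum_{k\geqslant 0} \zeta_0^{-k-1}A^k\]
converges absolutely in operator norm to $(\zeta_0\1 - A)^{-1}$. Up to the sign convention chosen for $\res{\zeta}{A}$, this is the resolvent. Its partial sums are polynomials in $A$, so $\res{\zeta_0}{A}\in\mathcal{A}$.

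\emph{Step 2 (connectedness).} Let
\[U := \{\zeta\in\rho(A,\zeta_0) \,|\, \res{\zeta}{A}\in\mathcal{A}\}.\]
By Step 1, $U$ is nonempty. I show that $U$ is both open and relatively closed in the connected open set $\rho(A,\zeta_0)$, which forces $U = \rho(A,\zeta_0)$.

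For openness, take $\zeta_1\in U$. For $|\zeta-\zeta_1| < \opnorm{\res{\zeta_1}{A}}^{-1}$ the standard resolvent expansion gives
\[\res{\zeta}{A} = \sum_{n\geqslant 0} (\pm 1)^n(\zeta-\zeta_1)^n\,\res{\zeta_1}{A}^{n+1},\]
converging in operator norm, with the sign fixed by the convention. Each power $\res{\zeta_1}{A}^{n+1}$ lies in $\mathcal{A}$ because $\mathcal{A}$ is an algebra. The partial sums therefore lie in $\mathcal{A}$, and since $\mathcal{A}$ is closed, so does the limit. Thus the whole disc is contained in $U$.

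For relative closedness, let $\zeta_n\in U$ with $\zeta_n\to\zeta\in\rho(A,\zeta_0)$. The map $\zeta\mapsto\res{\zeta}{A}$ is norm-continuous (indeed holomorphic) on $\rho(A)$, so $\res{\zeta_n}{A}\to\res{\zeta}{A}$ in operator norm. Closedness of $\mathcal{A}$ then gives $\zeta\in U$.

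The main point needing care is the observation that $\mathcal{A}$ is closed under multiplication. This is what allows the local power series at an arbitrary point $\zeta_1$ (and not just the Neumann series at infinity) to stay inside $\mathcal{A}$. Beyond that, one only needs to state the resolvent-expansion radius and the continuity of the resolvent explicitly. Connectedness of $\rho(A,\zeta_0)$ is by definition, as it is a connected component of the open set $\rho(A)$.
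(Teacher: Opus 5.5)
Your proof is correct and follows the same strategy as the paper. Both arguments use the Neumann series at $\zeta_0$, the fact that powers of a resolvent lying in $\overline{\mathbb{P}(A)}^{\opnorm{\cdot}}$ stay there, and local re-expansion of the resolvent propagated through $\rho(A,\zeta_0)$. The paper proves the powers step by an induction via Lemma~\ref{lem:Pop_invariant}; you get it at once from the closed-subalgebra observation. The one real difference is that you propagate by an open--closed argument instead of the paper's finite chain of discs of radius $\eta/4$ along a path $\Gamma$, which lets you use the cruder radius $\opnorm{\res{\zeta_1}{A}}^{-1}$ and drop both the chain bookkeeping and the formula $\mathrm{spr}\,\res{\zeta_0}{A} = 1/\mathrm{dist}(\zeta_0,\sigma(A))$.
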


\begin{remark}\label{rem:poly_resolvent}
It is interesting to note that from Theorem~\ref{th:poly_resolvent_opnorm} we see that the operator functions
\[(\zeta\1 - A)^{-1}\]
are actually approximable by polynomials in $A$ in operator norm when $\zeta > \mathrm{spr}A$. The significance of this is that certain rational Krylov subspaces that use the resolvent for $\zeta > \mathrm{spr}A$ are actually a subset of the Krylov subspace itself $\Kc{A}{g}$ (both for the same $g \in X$). Therefore, the question of Krylov solvability for the traditional polynomial Krylov subspace is relevant in these cases.
\end{remark}

\begin{corollary}\label{cor:k-class}
Let $A: X \to X$ be a bounded and everywhere defined operator of the $\mathscr{K}$-class (see \cite[Sect.~3.5]{CMN-2018_Krylov-solvability-bdd}). Then $A^{-1} \in \overline{\mathbb{P}(A)}^{\opnorm{\cdot}}$, and in particular, the inverse linear problem $Af = g$ is Krylov solvable for all $g \in X$.
\end{corollary}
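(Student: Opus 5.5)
The natural route is to use Theorem~\ref{th:poly_resolvent_opnorm} with $\zeta = 0$. Recall the definition from \cite[Sect.~3.5]{CMN-2018_Krylov-solvability-bdd}: $A$ is of the $\mathscr{K}$-class when $0 \in \rho(A)$ and $0$ lies in the unbounded connected component of $\rho(A)$. In other words, $0$ can be joined to $\infty$ by a path avoiding $\sigma(A)$.

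The first step is to check that this component is the component $\rho(A,\zeta_0)$ of Theorem~\ref{th:poly_resolvent_opnorm} for any real $\zeta_0 > \mathrm{spr}A$. The set $\{\zeta : |\zeta| > \mathrm{spr}A\}$ is connected, lies in $\rho(A)$, and is unbounded. So it sits inside the unique unbounded component of $\rho(A)$, and $\zeta_0$ belongs to it. Hence $\rho(A,\zeta_0)$ is exactly the unbounded component, and by hypothesis it contains $0$. Theorem~\ref{th:poly_resolvent_opnorm} then gives $\res{0}{A} \in \overline{\mathbb{P}(A)}^{\opnorm{\cdot}}$. Since $\res{0}{A} = (0\cdot\1 - A)^{-1} = -A^{-1}$ and $\overline{\mathbb{P}(A)}^{\opnorm{\cdot}}$ is a linear space, we get $A^{-1} \in \overline{\mathbb{P}(A)}^{\opnorm{\cdot}}$.

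For Krylov solvability, fix $g \in X$. Since $0 \in \rho(A)$, the problem has the unique solution $f = A^{-1}g$. Choose polynomials $p_n$ with $\opnorm{p_n(A) - A^{-1}} \to 0$. Then
\[\norm{p_n(A)g - f}{X} \leqslant \opnorm{p_n(A) - A^{-1}}\,\norm{g}{X} \to 0\,,\]
and each $p_n(A)g$ lies in $\K{A}{g}$. Therefore $f \in \Kc{A}{g}$. Here $A$ is bounded, so the graph norm is equivalent to the ambient norm, and the closures in $V$ and in $X$ coincide.

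All of the substance is contained in Theorem~\ref{th:poly_resolvent_opnorm}, which we assume. The only step needing care is the topological identification of $\rho(A,\zeta_0)$ with the unbounded component of $\rho(A)$. It uses that $\sigma(A)$ is compact and contained in the closed disc of radius $\mathrm{spr}A$, so the exterior of that disc is a connected unbounded subset of $\rho(A)$. I would also check that the sign convention $\res{\zeta}{A} = (\zeta\1 - A)^{-1}$ is the one in use; under either convention the sign is harmless.
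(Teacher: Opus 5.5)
Your proposal is correct and follows essentially the paper's own proof: apply Theorem~\ref{th:poly_resolvent_opnorm} at $\zeta = 0$, using that the $\mathscr{K}$-class hypothesis places $0$ in the component $\rho(A,\zeta_0)$ for $\zeta_0 > \mathrm{spr}A$, and then pass from $p_n(A) \to A^{-1}$ in operator norm to $p_n(A)g \to A^{-1}g$. Your identification of $\rho(A,\zeta_0)$ with the unbounded component of $\rho(A)$ fills in a step the paper states in one line, and so does your remark that the sign convention is immaterial because $\overline{\mathbb{P}(A)}^{\opnorm{\cdot}}$ is a linear space (the paper in fact uses Kato's convention, under which $\res{0}{A} = A^{-1}$).
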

\begin{proof}
As $A$ is of the $\mathscr{K}$-class of operators, we know that $0 \in \rho(A)$ and that $0$ is connected to at least one $\zeta_0 > \mathrm{spr} A$. Thus, by Theorem~\ref{th:poly_resolvent_opnorm} we have that $A^{-1} = \res{0}{A} \in \overline{\mathbb{P}(A)}^{\opnorm{\cdot}}$, and in particular there exists a polynomial sequence $(p_n)_{n \in \N}$ such that $p_n(A) \xrightarrow{\opnorm{\cdot}} A^{-1}$, whence $p_n(A)g \to A^{-1}g$ for all $g \in X$.
\end{proof}
\begin{remark}\label{rem:k-class}
In contrast to \cite{CMN-2018_Krylov-solvability-bdd}, in Corollary~\ref{cor:k-class} and the theorem upon which it is based, we do not need to invoke neither the Dunford-Schwartz functional calculus nor results from complex approximation theory. Indeed the result follows directly from the analytic properties of the resolvent (in $\zeta$).
\end{remark}

\begin{theorem}\label{th:resolvent_connected_Ksolv}
Consider $A : X \to X$ a closed operator and suppose that $g \in C^{\infty}(A)$. Let $\zeta_0 \in \rho(A)$, and let $\rho(A,\zeta_0)$ be the connected component of the resolvent containing $\zeta_0$. If $\res{\zeta_0}{A}g \in \Kc{A}{g}$ and $A\big(\Kc{A}{g}\cap\mathcal{D}(A)\big) \subset \Kc{A}{g}$, then $\res{\zeta}{A}g \in \Kc{A}{g}$ for all $\zeta \in \rho(A,\zeta_0)$.
\end{theorem}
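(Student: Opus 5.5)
Let $\Omega := \{\zeta \in \rho(A,\zeta_0) \,|\, \res{\zeta}{A}g \in \Kc{A}{g}\}$. The plan is a connectedness argument: $\Omega$ contains $\zeta_0$, and it suffices to show that $\Omega$ is both open and relatively closed in $\rho(A,\zeta_0)$. Since $\rho(A,\zeta_0)$ is connected, this forces $\Omega = \rho(A,\zeta_0)$.

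\emph{Closedness.} The map $\zeta \mapsto \res{\zeta}{A}g$ is continuous from $\rho(A)$ into $X$; indeed the resolvent is norm-continuous (holomorphic) on $\rho(A)$. Since $\Kc{A}{g}$ is closed in $X$, its preimage $\Omega$ is relatively closed in $\rho(A,\zeta_0)$.

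\emph{Openness.} Fix $\zeta_1 \in \Omega$ and set $h := \res{\zeta_1}{A}g \in \Kc{A}{g}$. For $|\zeta-\zeta_1| < \opnorm{\res{\zeta_1}{A}}^{-1}$ we have the Neumann series
\[\res{\zeta}{A}g = \sum_{k\geqslant 0} (\zeta_1-\zeta)^k \res{\zeta_1}{A}^{k+1} g,\]
which converges in norm. It therefore suffices to show $\res{\zeta_1}{A}^{k}g \in \Kc{A}{g}$ for all $k\geqslant 1$; the partial sums then lie in $\Kc{A}{g}$, and by closedness so does the limit.

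The key step, which I expect to be the main obstacle, is to show that $\res{\zeta_1}{A}$ maps $g$ and its iterates into $\Kc{A}{g}$. For this I would prove the stronger claim that $\res{\zeta_1}{A}\,\Kc{A}{g} \subset \Kc{A}{g}$.

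First, for a polynomial $p$, commutation of the resolvent with $A$ on $\mathcal{D}(A)$ gives $\res{\zeta_1}{A}\,p(A)g = p(A)\res{\zeta_1}{A}g = p(A)h$. This requires $h$ to lie in the domain of all powers of $A$, which holds because $g \in C^\infty(A)$ and the resolvent maps $\mathcal{D}(A^k)$ into $\mathcal{D}(A^{k+1})$. Next, iterate the hypothesis $A(\Kc{A}{g}\cap\mathcal{D}(A)) \subset \Kc{A}{g}$: since $h \in \Kc{A}{g}\cap C^\infty(A)$, each $A^j h$ lies in $\Kc{A}{g}\cap\mathcal{D}(A)$, so inductively $p(A)h \in \Kc{A}{g}$. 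Hence $\res{\zeta_1}{A}\K{A}{g} \subset \Kc{A}{g}$, and boundedness of $\res{\zeta_1}{A}$ on $X$ extends this to the closure $\Kc{A}{g}$. Iterating, $\res{\zeta_1}{A}^{k}g \in \Kc{A}{g}$ for every $k$, which completes openness.

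The delicate points are the domain bookkeeping in the commutation $\res{\zeta_1}{A}A^j g = A^j\res{\zeta_1}{A}g$ and the inductive use of the invariance hypothesis, which is exactly where that hypothesis enters. Note that the argument only ever applies the resolvent to elements of $\Kc{A}{g}$, so no invariance of $\Kc{A}{g}$ under the resolvent needs to be assumed a priori.
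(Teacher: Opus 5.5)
Your proof is correct. Its local step matches the paper's: you expand $\res{\zeta}{A}g$ in a Neumann series about a point $\zeta_1$ where $\res{\zeta_1}{A}g \in \Kc{A}{g}$. You then show that every power $\res{\zeta_1}{A}^{k}g$ lies in $\Kc{A}{g}$, using commutation of the resolvent with $A^j$ on $C^\infty(A)$ and the no-Krylov-escape hypothesis applied repeatedly. These are the paper's Lemmas~\ref{lem:resAk_commute} and~\ref{lem:resAk_KAg}. Stating this as the invariance $\res{\zeta_1}{A}\,\Kc{A}{g}\subset\Kc{A}{g}$ is a slightly tidier form of the paper's induction.

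You differ from the paper in the globalization. The paper joins $\zeta_0$ to an arbitrary $\zeta'$ by a curve $\Gamma$ in $\rho(A,\zeta_0)$ and uses compactness to get a uniform distance $\eta>0$ from $\Gamma$ to $\sigma(A)$. It then invokes $\mathrm{spr}\,\res{\zeta}{A} = 1/\mathrm{dist}(\zeta,\sigma(A))$ so that each re-expansion converges on a ball of radius comparable to $\eta$, and finally walks along a finite chain of overlapping balls of radius $\eta/4$.

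Your open--closed argument replaces all of this. Openness needs only \emph{some} positive radius at each point, so the crude bound $\opnorm{\res{\zeta_1}{A}}^{-1}$ suffices. Closedness follows from norm continuity of $\zeta\mapsto\res{\zeta}{A}g$ and closedness of $\Kc{A}{g}$. Your route therefore avoids the spectral-radius formula, the path, and the somewhat informal step of arranging a finite subcover into a chain. It uses only that $\rho(A,\zeta_0)$ is connected.

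The paper's route is more constructive, since it gives an explicit finite sequence of re-expansions along a path. It also reuses the same chain construction for Theorem~\ref{th:poly_resolvent_opnorm}. Your argument would transfer there just as easily, with the closed algebra $\overline{\mathbb{P}(A)}^{\opnorm{\cdot}}$ in place of $\Kc{A}{g}$ and operator-norm continuity of the resolvent in place of vector continuity.
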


\begin{remark}\label{rem:th_resolvent_connected_Krylov}
We note the condition $A\big(\Kc{A}{g}\cap\mathcal{D}(A)\big) \subset \Kc{A}{g}$ in Theorem~\ref{th:resolvent_connected_Ksolv} says that that the phenomenon of Krylov escape does \emph{not} occur (see \cite{CM-2019_ubddKrylov} for details and a counterexample), a phenomenon that indeed may occur in the unbounded operator setting. We further note that under the very natural assumption that the Krylov-core condition holds, i.e., $\Kc{A}{g}^{V} = \Kc{A}{g} \cap \mathcal{D}(A)$, Krylov escape does not occur. This condition is discussed in \cite{CM-2019_ubddKrylov} for operators in Hilbert space, and also more recently in \cite{C-2025-KrylovStruct} for self-adjoint operators.

Obviously, when $A$ is a bounded and everywhere defined operator, the Krylov core condition is satisfied and therefore there is no possibility of Krylov escape.
\end{remark}

\begin{remark}\label{rem:comparison_to_Hilbert}
We remark that in the case of Theorem~\ref{th:resolvent_connected_Ksolv} for $A$ a bounded everywhere defined operator in the \emph{Hilbert space} setting, if $0$ is in the resolvent set, then the presence of some $\zeta_0$ in the connected component $\rho(A,0)$ such that $\res{\zeta_0}{A}g \in \Kc{A}{g}$ is tantamount to the triviality of the Krylov intersection owing to the Krylov solvability (see \cite[Prop.~3.4]{CMN-2018_Krylov-solvability-bdd} for further details).
\end{remark}

\subsection{Proofs of Theorems~\ref{th:poly_resolvent_opnorm} and \ref{th:resolvent_connected_Ksolv}}~

We shall now prove both Theorems~\ref{th:poly_resolvent_opnorm} and \ref{th:resolvent_connected_Ksolv}. We begin first with the proof of Theorem~\ref{th:resolvent_connected_Ksolv}.

\begin{proof}[Proof of Theorem~\ref{th:resolvent_connected_Ksolv}]
We begin with the resolvent formula (see \cite[Sect.~5.2, Ch.~I]{Kato-perturbation})
\begin{equation}\label{eq:proof_thresolventconnectedKsolvI}
\res{\zeta}{A} = \sum_{n=0}^{\infty} (\zeta - \zeta_0)^n \res{\zeta_0}{A}^{n+1}\,, \quad \text{for}\,\,\, \vert \zeta - \zeta_0 \vert < \frac{1}{\mathrm{spr}\res{\zeta_0}{A}}\,, 
\end{equation}
which is convergent in $\opnorm{\cdot}$. It is also known (\cite[Prob.~6.16, Ch.~III]{Kato-perturbation}) that
\[\mathrm{spr}\res{\zeta_0}{A} = \frac{1}{\mathrm{dist}(\zeta_0,\sigma(A))}\,,\]
and therefore \eqref{eq:proof_thresolventconnectedKsolvI} holds for all $\zeta \in \C$ such that $\vert \zeta - \zeta_0 \vert < \mathrm{dist}(\zeta_0,\sigma(A))$. For such $\zeta$ we have
\begin{equation}\label{eq:proof_thresolventconnectedKsolvII}
\res{\zeta}{A}g = \sum_{n=0}^{\infty} (\zeta-\zeta_0)^n \res{\zeta_0}{A}^{n+1}g\,.
\end{equation}
We claim that $\res{\zeta_0}{A}^n g \in \Kc{A}{g}$ for all $n \in \N_0$. It is clear for the case $n=0$, and for the case $n=1$ by hypothesis. Therefore, we proceed by induction and assume that it is true for a given $n \in \N_0$. We see that $\res{\zeta_0}{A}^{n+1}g =\res{\zeta_0}{A}\res{\zeta_0}{A}^ng$ and as $\res{\zeta_0}{A}^ng \in \Kc{A}{g}$, there exists a polynomial sequence $(p_m)_{m \in \N}$ such that $p_m(A)g \xrightarrow{m \to \infty} \res{\zeta_0}{A}^n g$. By Lemma~\ref{lem:resAk_commute} we see that
\[A^k \res{\zeta_0}{A} g = \res{\zeta_0}{A}A^k g \,, \quad \forall \, k \in \N_0\,,\]
as a consequence of the commutativity of the operator and resolvent $A \res{\zeta_0}{A} \supset \res{\zeta_0}{A}A$. Therefore, $p_m(A) \res{\zeta_0}{A} g = \res{\zeta_0}{A} p_m(A) g$ for all $m \in \N_0$, and from the continuity of the resolvent we obtain
\[\res{\zeta_0}{A}p_m(A) g \xrightarrow{m \to \infty} \res{\zeta_0}{A}^{n+1} g\,.\]
As $A\big(\Kc{A}{g} \cap \mathcal{D}(A) \big) \subset \Kc{A}{g}$, as a consequence of Lemma~\ref{lem:resAk_KAg} we have that additionally $p_m(A) \res{\zeta_0}{A}g \in \Kc{A}{g}$ for all $m \in \N$. Thus $\res{\zeta_0}{A}^{n+1} g \in \Kc{A}{g}$ for all $n \in \N_0$ by induction. So we see that
\[\sum_{n = 0}^\infty (\zeta - \zeta_0)^n \res{\zeta_0}{A}^{n+1} g \in \Kc{A}{g}\,.\]

Now we consider a generic $\zeta' \in \rho(A,\zeta_0)$ and a finite continuous curve $\Gamma:[0,1] \to \C$ contained in the open set $\rho(A,\zeta_0)$ such that $\Gamma(0)=\zeta_0$ and $\Gamma(1) = \zeta'$. Let $\eta$ be the number
\[\eta = \inf_{\zeta \in \Gamma} \mathrm{dist}(\zeta,\sigma(A))\,.\]
$\eta >0$ and is in fact the \emph{minimum} distance from $\Gamma$ to the spectrum $\sigma(A)$ as $\Gamma$ is compact and the distance function is continuous. We now consider open balls with centres $\zeta \in \Gamma$ and radius $\frac{1}{4}\eta$, i.e., the collection of open balls $\{B(\zeta, \frac{1}{4}\eta)\}_{\zeta \in \Gamma}$. Each ball is contained entirely in $\rho(A,\zeta_0)$, and from the compactness of $\Gamma$ we may choose a finite sub-cover of these balls $\{B(\zeta_i, \frac{1}{4}\eta)\}_{i=1}^n$ in particular such that, without loss of generality, $\zeta_0 \in B(\zeta_1,\frac{1}{4}\eta)$, $\zeta' \in B(\zeta_n,\frac{1}{4}\eta)$, and $B(\zeta_i,\frac{1}{4}\eta) \cap B(\zeta_{i+1},\frac{1}{4}\eta) \neq \emptyset$ for all $i=1,\dots,n-1$. For all $\zeta \in B(\zeta_1,\frac{1}{4}\eta)$ we see that $\res{\zeta}{A}g \in \Kc{A}{g}$ by \eqref{eq:proof_thresolventconnectedKsolvII}. As also
\[\vert \zeta_2 - \zeta_0 \vert < \frac{3}{4}\eta < \mathrm{dist}(\zeta_0,\sigma(A))\,,\]
we obtain $\res{\zeta_2}{A} g \in \Kc{A}{g}$ from \eqref{eq:proof_thresolventconnectedKsolvII}. 

Now we consider $\zeta$ such that 
\[\vert \zeta - \zeta_2 \vert < \frac{1}{4}\eta < \mathrm{dist}(\zeta_2,\sigma(A))\,.\]
The following resolvent formulae,
\begin{align}
\res{\zeta}{A} &= \sum_{n=0}^{\infty} (\zeta - \zeta_2)^n \res{\zeta_2}{A}^{n+1}\,, \label{eq:proof_thresolventconnectedKsolvIII}  \\
\res{\zeta}{A}g &= \sum_{n=0}^{\infty} (\zeta-\zeta_2)^n \res{\zeta_2}{A}^{n+1}g\,, \label{eq:proof_thresolventconnectedKsolvIV} 
\end{align}
are valid for all $\vert \zeta - \zeta_2 \vert < \mathrm{dist}(\zeta_2,\sigma(A))$. By following a similar induction argument as above and invoking Lemmas~\ref{lem:resAk_commute} and \ref{lem:resAk_KAg} as was done for $\res{\zeta_0}{A}^n g$, we deduce that $\res{\zeta_2}{A}^n g \in \Kc{A}{g}$ for all $n \in \N_0$. Therefore, $\res{\zeta}{A}g \in \Kc{A}{g}$ for all $\zeta \in B(\zeta_2, \frac{1}{4}\eta)$ as a consequence of \eqref{eq:proof_thresolventconnectedKsolvIV}.

We now consider $\zeta_3$ noting that
\[\vert \zeta_3 - \zeta_2 \vert < \frac{1}{2} \eta < \mathrm{dist}(\zeta_2,\sigma(A))\,.\]
By \eqref{eq:proof_thresolventconnectedKsolvIV} we see that $\res{\zeta_3}{A}g \in \Kc{A}{g}$. Therefore, by following similar arguments as in the case of $\zeta_2$, we see that $\res{\zeta}{A}g \in \Kc{A}{g}$ for all $\zeta \in B(\zeta_3,\frac{1}{4}\eta)$.

We continue as in the previous paragraphs for the other balls, eventually showing that $\res{\zeta}{A}g \in \Kc{A}{g}$ for all $\zeta \in B(\zeta_n,\frac{1}{4}\eta)$, and therefore in particular $\res{\zeta'}{A}g \in \Kc{A}{g}$. As the choice of $\zeta' \in \rho(A,\zeta_0)$ was arbitrary, we conclude the proof.
\end{proof} 

\begin{proof}[Proof of Theorem~\ref{th:poly_resolvent_opnorm}]
Consider any $\zeta > \mathrm{spr} A$. Then it is well-known (see \cite{Kato-perturbation}) that
\begin{equation}\label{eq:res_laurentseries}
\res{\zeta}{A} = -\sum_{n=0}^{\infty} \zeta^{-n-1} A^n \,,
\end{equation}
where the summation is convergent in $\opnorm{\cdot}$. Thus, it is clear that $\res{\zeta}{A} \in \overline{\mathbb{P}(A)}^{\opnorm{\cdot}}$.

Now we take any $\zeta_0 > \mathrm{spr}A$ (the specific choice doesn't matter, as $\rho(A,\zeta_0)$ contains all the points $\zeta > \mathrm{spr}A$), and consider some $\zeta' \in \rho(A,\zeta_0)$. As in the proof of Theorem~\ref{th:resolvent_connected_Ksolv} we consider the finite, continuous simple curve $\Gamma:[0,1] \to \C$ that lies entirely in $\rho(A,\zeta_0)$ such that $\Gamma(0) = \zeta_0$ and $\Gamma(1) = \zeta'$. We cover $\Gamma$ with the finite collection of open balls $\{B(\zeta_i,\frac{1}{4}\eta)\}_{i=1}^n$ in exactly the same way as was done in the proof of Theorem~\ref{th:resolvent_connected_Ksolv}. Furthermore, as $A \in \mathscr{B}(X)$, we have that $A^k \res{\zeta}{A} = \res{\zeta}{A}A^k$ for all $\zeta \in \rho(A)$ and for all $k \in \N_0$ due to the commutativity of the resolvent with $A$. 

It is clear that $\res{\zeta_0}{A} \in \overline{\mathbb{P}(A)}^{\opnorm{\cdot}}$ from \eqref{eq:res_laurentseries}. Suppose that $n \in \N_0$ and, by induction, assume that $\res{\zeta_0}{A}^n \in \overline{\mathbb{P}(A)}^{\opnorm{\cdot}}$. Therefore there exists a sequence of polynomials $(p_m)_{m \in \N}$ such that $p_m(A) \xrightarrow{\opnorm{\cdot}} \res{\zeta_0}{A}^n$. So
\[p_m(A)\res{\zeta_0}{A} \xrightarrow{\opnorm{\cdot}} \res{\zeta_0}{A}^{n+1}\,,\]
and by Lemma~\ref{lem:Pop_invariant} $A^k \res{\zeta_0}{A} \in \overline{\mathbb{P}(A)}^{\opnorm{\cdot}}$ for all $k \in \N_0$, which implies that $p_m(A)\res{\zeta_0}{A} \in \overline{\mathbb{P}(A)}^{\opnorm{\cdot}}$ for all $m \in \N$. Thus $\res{\zeta_0}{A}^{n+1} \in \overline{\mathbb{P}(A)}^{\opnorm{\cdot}}$. So we have shown that $\res{\zeta_0}{A}^n \in \overline{\mathbb{P}(A)}^{\opnorm{\cdot}}$ for all $n \in \N_0$ by induction.

As a result $\res{\zeta}{A} \in \overline{\mathbb{P}(A)}^{\opnorm{\cdot}}$ for all $\zeta$ such that 
\[\vert \zeta - \zeta_0 \vert < \frac{1}{4}\eta < \mathrm{dist}(\zeta_0,\sigma(A))\]
by \eqref{eq:proof_thresolventconnectedKsolvI}, and in particular $\res{\zeta_1}{A} \in \overline{\mathbb{P}(A)}^{\opnorm{\cdot}}$. By a similar induction process as in the previous paragraph, we also find that $\res{\zeta_1}{A}^n \in \overline{\mathbb{P}(A)}^{\opnorm{\cdot}}$ for all $n \in \N_0$. By replacing $\zeta_2$ with $\zeta_1$ in \eqref{eq:proof_thresolventconnectedKsolvIII} and the fact that 
\[\vert \zeta_2 - \zeta_1 \vert < \frac{1}{2}\eta < \mathrm{dist}(\zeta_1,\sigma(A))\,,\]
we obtain the result $\res{\zeta_2}{A} \in \overline{\mathbb{P}(A)}^{\opnorm{\cdot}}$. Again by induction, $\res{\zeta_2}{A}^n \in \overline{\mathbb{P}(A)}^{\opnorm{\cdot}}$ for all $n \in \N_0$. From \eqref{eq:proof_thresolventconnectedKsolvIII} we see that $\res{\zeta}{A} \in \overline{\mathbb{P}(A)}^{\opnorm{\cdot}}$ for all $\zeta \in B(\zeta_2,\frac{1}{4}\eta)$, and In particular we have $\res{\zeta_3}{A} \in \overline{\mathbb{P}(A)}^{\opnorm{\cdot}}$ as $\vert \zeta_3 - \zeta_2 \vert < \mathrm{dist}(\zeta_2,\sigma(A))$. By induction we see that $\res{\zeta_3}{A}^n \in \overline{\mathbb{P}(A)}^{\opnorm{\cdot}}$ for all $n \in \N_0$.

We continue this process and eventually show that $\res{\zeta}{A} \in \overline{\mathbb{P}(A)}^{\opnorm{\cdot}}$ for all $\zeta \in B(\zeta_n,\frac{1}{4}\eta)$, in particular $\res{\zeta'}{A} \in \overline{\mathbb{P}(A)}^{\opnorm{\cdot}}$. As the choice of $\zeta' \in \rho(A,\zeta_0)$ is arbitrary, we conclude the proof.
\end{proof} 

\begin{lemma}\label{lem:resAk_commute}
Let $A:X \to X$ be a closed operator and let $\zeta \in \rho(A)$. Then $\res{\zeta}{A}A^k f = A^k \res{\zeta}{A}f$ for all $k \in \N_0$ and for all $f \in C^{\infty}(A)$.
\end{lemma}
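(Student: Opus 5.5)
We argue by induction on $k$, and the whole argument rests on one basic fact about the resolvent of a closed operator. Write $R := \res{\zeta}{A} = (\zeta\1 - A)^{-1}$. The fact is that $R$ maps $X$ into $\mathcal{D}(A)$ and commutes with $A$ on $\mathcal{D}(A)$:
\[RAx = ARx \quad \text{for all } x \in \mathcal{D}(A)\,.\]
To see this, take $x \in \mathcal{D}(A)$. Then $(\zeta\1 - A)x \in X$, so $R(\zeta\1-A)x = x$. We also have $(\zeta\1 - A)Rx' = x'$ for every $x' \in X$. Apply the latter with $x' = (\zeta\1-A)x$. On the one hand,
\[(\zeta\1 - A)R(\zeta\1 - A)x = (\zeta\1 - A)x\,.\]
On the other hand, $R(\zeta\1-A)x = \zeta Rx - RAx$, and $R(\zeta\1-A)x = x$. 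Hence $\zeta Rx - RAx = x$. Since also $\zeta Rx - ARx = (\zeta\1-A)Rx$, we can compare: applying $R$ to $(\zeta\1-A)x$ and using $R(\zeta\1-A)Rx = Rx$ together with the injectivity of $\zeta\1 - A$ gives $ARx = RAx$. Put concretely, both $RAx$ and $ARx$ equal $\zeta Rx - x$: the first by the identity just derived, the second because $(\zeta\1 - A)Rx = x$.

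The induction on $k$, for a fixed $f \in C^\infty(A)$, then runs as follows.
\begin{itemize}
\item For $k = 0$ there is nothing to prove.
\item Suppose that $Rf \in \mathcal{D}(A^k)$ and $A^k R f = R A^k f$.
\item Since $f \in C^\infty(A)$, we have $A^k f \in \mathcal{D}(A)$. The basic fact applied to $x = A^kf$ gives that $RA^kf \in \mathcal{D}(A)$ and $A R A^k f = R A^{k+1} f$.
\item By the inductive hypothesis $RA^kf = A^kRf$. Therefore $A^kRf \in \mathcal{D}(A)$, that is, $Rf \in \mathcal{D}(A^{k+1})$, and
\[A^{k+1}Rf = A(A^kRf) = A(RA^kf) = RA^{k+1}f\,.\]
\end{itemize}
This closes the induction. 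As a by-product it shows that $Rf \in C^\infty(A)$, which is needed for the left-hand side $A^k\res{\zeta}{A}f$ to make sense.

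There is no real obstacle here. The only point requiring care is the domain bookkeeping: at each step we must verify that $A^kRf$ lies in $\mathcal{D}(A)$ before applying $A$ to it. We obtain this from the basic fact, not by assuming it. Closedness of $A$ enters only through the existence of the bounded inverse $R$ with range $\mathcal{D}(A)$, which is what $\zeta \in \rho(A)$ means.
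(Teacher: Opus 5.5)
Your proof is correct and takes essentially the same approach as the paper: induction on $k$ driven by the commutation $\res{\zeta}{A}A \subset A\res{\zeta}{A}$. The paper cites this commutation from Kato, while you derive it directly; your closing observation that, with your sign convention, both $\res{\zeta}{A}Ax$ and $A\res{\zeta}{A}x$ reduce to $\zeta\res{\zeta}{A}x - x$ is the clean way to see it. The remaining differences are cosmetic: the paper applies the inductive hypothesis to $Af$ and then commutes once, whereas you commute at $A^kf$, apply the hypothesis to $f$, and track $\res{\zeta}{A}f \in \mathcal{D}(A^{k+1})$ explicitly.
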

\begin{proof}
Let $\zeta \in \rho(A)$. Due to the fact that (by convention) $A^0 = \1$ and also the fact that $A$ commutes with its resolvent, $\res{\zeta}{A} A \subset A \res{\zeta}{A}$ (\cite{Kato-perturbation}), the lemma is true for the cases $k=0$ and $k=1$.

Proceeding by induction, we assume that the lemma holds for a given $k \in \N_0$, i.e.,
\[\res{\zeta}{A}A^k h = A^k \res{\zeta}{A} h \,,\quad \forall \, h \in C^{\infty}(A)\,.\]
Let $f \in C^{\infty}(A)$, so that we have 
\[\res{\zeta}{A}A^{k+1}f = \res{\zeta}{A}A^k Af\,.\]
It is obvious that $Af \in C^{\infty}(A)$, so that $\res{\zeta}{A}A^k Af = A^k\res{\zeta}{A}Af$, and then from the fact that $A$ and its resolvent commute, we get 
\[\res{\zeta}{A}A^k Af = A^k\res{\zeta}{A}Af = A^{k+1}\res{\zeta}{A}f\,.\]
As the choice of $f \in C^{\infty}(A)$ is arbitrary, this holds for all $f \in C^{\infty}(A)$. We conclude the proof by invoking the induction principle.
\end{proof}

\begin{lemma}\label{lem:resAk_KAg}
Let $A: X \to X$ be a closed operator, let $\zeta \in \rho(A)$, and let $g \in C^\infty(A)$. If
\begin{itemize}
	\item[(i)] $A\big(\Kc{A}{g} \cap \mathcal{D}(A)\big) \subset \Kc{A}{g}$, and
	\item[(ii)] $\res{\zeta}{A}g \in \Kc{A}{g}$,
\end{itemize}
then $A^k \res{\zeta}{A}g \in \Kc{A}{g}$ for all $k \in \N_0$.
\end{lemma}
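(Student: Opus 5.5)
The plan is to argue by induction on $k$, using hypothesis (i) at each step to push one more power of $A$ inside $\Kc{A}{g}$. The only delicate point is that hypothesis (i) applies only to vectors in $\Kc{A}{g} \cap \mathcal{D}(A)$. So at each stage I must check that $A^k \res{\zeta}{A}g$ lies in $\mathcal{D}(A)$, and not merely in $\Kc{A}{g}$.

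The base case $k=0$ is exactly hypothesis (ii). For the inductive step, suppose $A^k \res{\zeta}{A}g \in \Kc{A}{g}$ for some $k \in \N_0$.

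First I establish the domain condition. Since $g \in C^\infty(A)$, we have $A^k g \in \mathcal{D}(A)$ for every $k$. By Lemma~\ref{lem:resAk_commute}, $A^k \res{\zeta}{A} g = \res{\zeta}{A}A^k g$. The range of $\res{\zeta}{A}$ is $\mathcal{D}(A)$, so $\res{\zeta}{A}A^k g \in \mathcal{D}(A)$. Hence $A^k\res{\zeta}{A}g \in \Kc{A}{g}\cap\mathcal{D}(A)$.

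Next, hypothesis (i) gives $A\big(A^k\res{\zeta}{A}g\big) = A^{k+1}\res{\zeta}{A}g \in \Kc{A}{g}$. This closes the induction.

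The main (mild) obstacle is precisely this domain bookkeeping. One must be sure that $A^{k+1}\res{\zeta}{A}g$ is well-defined, i.e.\ that $\res{\zeta}{A}g \in C^\infty(A)$. Lemma~\ref{lem:resAk_commute} handles this together with the elementary fact that $\res{\zeta}{A}$ maps $X$ onto $\mathcal{D}(A)$. Alternatively, one can apply induction with the identity $A^{k+1}\res{\zeta}{A}g = \res{\zeta}{A}A^{k+1}g$, whose right-hand side is plainly defined. No closure or limit argument should be needed beyond hypothesis (i).
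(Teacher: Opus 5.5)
Your proof is correct and follows the paper's argument essentially step for step. You induct from hypothesis (ii), use Lemma~\ref{lem:resAk_commute} to get $A^k\res{\zeta}{A}g = \res{\zeta}{A}A^k g \in \mathcal{D}(A)$, and then apply hypothesis (i) to advance the induction.
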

\begin{proof}
We proceed by induction. It is already obvious for $k =0$ by hypothesis (ii). By Lemma~\ref{lem:resAk_commute} we have that $A^k \res{\zeta}{A}g = \res{\zeta}{A}A^k g$ for all $k \in \N_0$, thus also implying that $A^k \res{\zeta}{A}g \in \mathcal{D}(A)$ for all $k \in \N_0$. Thus, by hypothesis (i) and (ii) we have that the statement is also true for $k = 1$.

Assume that the statement holds true for some $k \in \N_0$. Then,  as $A^k\res{\zeta}{A}g \in \Kc{A}{g} \cap \mathcal{D}(A)$, it is clear that by hypothesis (i) that 
\[A^{k+1}\res{\zeta}{A}g = A A^k \res{\zeta}{A}g \in \Kc{A}{g}\,.\]
By invoking the induction principle we conclude the proof.
\end{proof}

\begin{lemma}\label{lem:Pop_invariant}
Let $A: X \to X$ be a bounded everywhere defined operator, i.e., $A \in \mathscr{B}(X)$. Then for all $\lambda \in \C$ we have the inclusion $(A- \lambda\1)^k \overline{\mathbb{P}(A)}^{\opnorm{\cdot}} \subset \overline{\mathbb{P}(A)}^{\opnorm{\cdot}}$ for all $k \in \N_0$.
\end{lemma}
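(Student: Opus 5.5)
The plan is to first show that $\overline{\mathbb{P}(A)}^{\opnorm{\cdot}}$ is invariant under left multiplication by $A - \lambda\1$. The case of general $k$ then follows by a trivial induction on $k$.

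First we note that $\mathbb{P}(A)$ is a linear subspace of $\mathscr{B}(X)$, and that it is closed under multiplication by $A$ and by $\1$. Indeed, if $p$ is a polynomial, then $(A-\lambda\1)p(A) = q(A)$, where $q(z) = (z-\lambda)p(z)$ is again a polynomial. Hence $(A-\lambda\1)\mathbb{P}(A) \subset \mathbb{P}(A)$.

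Next we pass to the closure. Let $T \in \overline{\mathbb{P}(A)}^{\opnorm{\cdot}}$, and choose polynomials $p_m$ with $\opnorm{p_m(A) - T} \to 0$. Since $A \in \mathscr{B}(X)$, left multiplication by $A-\lambda\1$ is a bounded linear map on $\mathscr{B}(X)$, with
\[\opnorm{(A-\lambda\1)p_m(A) - (A-\lambda\1)T} \leqslant \opnorm{A-\lambda\1}\,\opnorm{p_m(A)-T} \xrightarrow{m\to\infty} 0\,.\]
Each $(A-\lambda\1)p_m(A)$ lies in $\mathbb{P}(A)$ by the first step. Therefore $(A-\lambda\1)T \in \overline{\mathbb{P}(A)}^{\opnorm{\cdot}}$, which is the case $k=1$.

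Finally we induct on $k$. The case $k=0$ is trivial, since $(A-\lambda\1)^0 = \1$. Suppose $(A-\lambda\1)^k T \in \overline{\mathbb{P}(A)}^{\opnorm{\cdot}}$. Writing
\[(A-\lambda\1)^{k+1}T = (A-\lambda\1)\bigl((A-\lambda\1)^kT\bigr)\]
and applying the $k=1$ case to $(A-\lambda\1)^k T$ completes the induction. There is no genuine obstacle here. The only point needing care is that the closure step uses boundedness of $A$; this is exactly why the lemma is stated only for $A \in \mathscr{B}(X)$.

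Taking $\lambda = 0$ recovers what the proof of Theorem~\ref{th:poly_resolvent_opnorm} uses: $A^k\res{\zeta_0}{A} \in \overline{\mathbb{P}(A)}^{\opnorm{\cdot}}$, given that $\res{\zeta_0}{A}$ lies in this closure.
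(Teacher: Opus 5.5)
Your proof is correct and takes essentially the same approach as the paper: left multiplication by a bounded operator is continuous on $\mathscr{B}(X)$ and maps $\mathbb{P}(A)$ into itself, so it maps the closure into the closure. The only cosmetic difference is that the paper applies this directly to $(A-\lambda\1)^k$ for each $k$, whereas you prove the case $k=1$ and then induct.
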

\begin{proof}
Let $k \in \N_0$. Then $(A - \lambda \1)^k$ is a bounded everywhere defined operator, and therefore defines the continuous mapping $F_k$ on $\mathscr{B}(X)$, $B \xmapsto{F_k} (A - \lambda\1)^k B$, $B \in \mathscr{B}(X)$. As $F_k\big(\mathbb{P}(A)\big) \subset \mathbb{P}(A)$, by the continuity of $F_k$ on $\mathscr{B}(X)$ we conclude.
\end{proof}

\subsection{Isolated points of the spectrum}~

It turns out that the resolvent formalism is quite useful to say something about what happens at isolated points of the spectrum. We use the concept of the \emph{reduced resolvent} to consider the behaviour about these isolated points (see \cite[Ch.~III, Sect.~6]{Kato-perturbation}).

\begin{proposition}\label{prop:isolated_pt}
Let $A:X \to X$ be a bounded everywhere defined operator, and let $\lambda \in \sigma(A)$ be an isolated point of its spectrum. Consider the projection operator 
\[P = -\frac{1}{2\pi i} \int_\Gamma \res{\zeta}{A} \, \rd \zeta\,,\]
for a simple continuous closed curve $\Gamma \subset \rho(A)$ such that only $\lambda$ is in the interior of $\Gamma$. Suppose that there exists $\zeta_0 \in \rho(A)$ such that $\zeta_0$ is in the same connected component of the resolvent as the curve $\Gamma$ and such that $\res{\zeta_0}{A} \in \overline{\mathbb{P}(A)}^{\opnorm{\cdot}}$. Then the reduced resolvent, $\mathcal{R}''(\zeta, A) = \res{\zeta}{A}(\1-P)$ is analytically extendible to $\zeta = \lambda$ and therefore approximable in operator norm by polynomials in $A$, i.e., $\mathcal{R}''(\zeta, A) \in \overline{\mathbb{P}(A)}^{\opnorm{\cdot}}$ for all $\zeta$ inside the closed curve $\Gamma$.

Moreover, if $g \in X$ and $Pg = 0$, then $\mathcal{R}''(\lambda, A)g \in \Kc{A}{g}$ and $f = \mathcal{R}''(\lambda, A)g$ is a solution to the inverse linear problem $(A-\lambda\1)f = g$.
\end{proposition}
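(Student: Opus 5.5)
The plan is to use Kato's theory of the reduced resolvent together with Theorem~\ref{th:poly_resolvent_opnorm}, or more precisely with the continuation argument in its proof. That argument only needs \emph{one} point of the component where $\res{\zeta_0}{A} \in \overline{\mathbb{P}(A)}^{\opnorm{\cdot}}$.

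\emph{Step 1: resolvents on the component.} Rerun the ball-covering induction from the proof of Theorem~\ref{th:poly_resolvent_opnorm}, starting at the given $\zeta_0$ instead of a point $\zeta_0 > \mathrm{spr}A$. This uses Lemma~\ref{lem:Pop_invariant} and the power series \eqref{eq:proof_thresolventconnectedKsolvI}. The conclusion is that $\res{\zeta}{A} \in \overline{\mathbb{P}(A)}^{\opnorm{\cdot}}$ for every $\zeta$ in the connected component of $\rho(A)$ containing $\zeta_0$, and in particular for every $\zeta \in \Gamma$.

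\emph{Step 2: the projection.} The map $\zeta \mapsto \res{\zeta}{A}$ is norm-continuous on the compact curve $\Gamma$. Hence the Riemann sums of $P = -\frac{1}{2\pi i}\int_\Gamma \res{\zeta}{A}\,\rd\zeta$ converge in operator norm. Each Riemann sum lies in the closed subspace $\overline{\mathbb{P}(A)}^{\opnorm{\cdot}}$, so $P$ does too, and therefore so does $\1 - P$. It will be useful to note that $\overline{\mathbb{P}(A)}^{\opnorm{\cdot}}$ is a closed subalgebra of $\mathscr{B}(X)$: products of norm limits of polynomials are norm limits of products of polynomials.

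\emph{Step 3: the reduced resolvent inside $\Gamma$.} By Kato \cite[Ch.~III, Sect.~6]{Kato-perturbation}, $P$ commutes with $A$ and reduces it. Moreover, $\mathcal{R}''(\zeta,A) = \res{\zeta}{A}(\1-P)$ coincides on $(\1-P)X$ with the resolvent of the part $A|_{(\1-P)X}$, whose spectrum is $\sigma(A)\setminus\{\lambda\}$. So $\mathcal{R}''$ extends holomorphically to the whole interior of $\Gamma$, including $\lambda$. By the Cauchy integral formula, for $\zeta$ inside $\Gamma$,
\[\mathcal{R}''(\zeta,A) = \frac{1}{2\pi i}\int_\Gamma \frac{\res{\xi}{A}(\1-P)}{\xi-\zeta}\,\rd\xi .\]
This is the step I expect to be the main obstacle, because it requires checking that the holomorphic extension is still given by a Cauchy integral with the correct sign and orientation. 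The integrand lies in $\overline{\mathbb{P}(A)}^{\opnorm{\cdot}}$ for each $\xi \in \Gamma$, by Steps 1--2 and the algebra property, and it is continuous in $\xi$. The same Riemann-sum argument as in Step 2 then gives $\mathcal{R}''(\zeta,A) \in \overline{\mathbb{P}(A)}^{\opnorm{\cdot}}$.

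\emph{Step 4: the inverse problem.} Take polynomials $p_m$ with $p_m(A) \to \mathcal{R}''(\lambda,A)$ in operator norm. Then $p_m(A)g \to \mathcal{R}''(\lambda,A)g$, and $p_m(A)g \in \K{A}{g}$, so $\mathcal{R}''(\lambda,A)g \in \Kc{A}{g}$. For the equation, Kato's identity $(A-\lambda\1)\mathcal{R}''(\lambda,A) = \1 - P$, up to the paper's sign convention for $\mathcal{R}$, follows by continuity from $(A-\zeta)\res{\zeta}{A}(\1-P) = \1-P$ for $\zeta \neq \lambda$. Using $Pg = 0$ this gives $(A-\lambda\1)f = g$ for $f = \mathcal{R}''(\lambda,A)g$, possibly up to a sign depending on whether $\res{\zeta}{A} = (A-\zeta)^{-1}$ or $(\zeta-A)^{-1}$. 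I would fix the convention consistent with \eqref{eq:res_laurentseries}, which corresponds to $(A-\zeta)^{-1}$, and adjust signs accordingly.
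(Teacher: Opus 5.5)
Your proposal is correct and follows essentially the same route as the paper. You propagate $\res{\zeta}{A}\in\overline{\mathbb{P}(A)}^{\opnorm{\cdot}}$ from $\zeta_0$ along its component to $\Gamma$, write $\mathcal{R}''(\zeta,A)$ inside $\Gamma$ as a contour integral over $\Gamma$ whose Riemann sums stay in the closed set, and obtain $(A-\lambda\1)f=g$ by letting $\zeta\to\lambda$. Your convention $\res{\zeta}{A}=(A-\zeta)^{-1}$ is the right one, so no sign adjustment is needed.

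The only real difference is the contour formula. The paper uses Kato's formula $\mathcal{R}''(\zeta,A)=-\frac{1}{2\pi i}\int_\Gamma \res{\zeta'}{A}\,\frac{\rd\zeta'}{\zeta-\zeta'}$, whose integrand lacks the factor $\1-P$; this formula gives the analytic extension to $\lambda$ directly, so the paper needs neither your Step~2 nor the spectrum of the reduced part. Your Cauchy-integral version is equally valid.
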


\begin{remark}\label{rem:isolated_pt}
We can think of Proposition~\ref{prop:isolated_pt} in simplified terms by considering a compact operator on $X$. It is known that the spectrum of $A$ is discrete, countable, and has unique accumulation point $0$. Moreover, each non-zero $\lambda \in \sigma(A)$ is a finite multiplicity eigenvalue. Requiring that $P g = 0$ is to say that the vector $g$ has no projection onto the eigenspace corresponding to $\lambda$.
\end{remark}

\begin{proof}[Proof of Proposition~\ref{prop:isolated_pt}]
We have the decomposition of the resolvent (see \cite[Sect.~6, Ch.~III]{Kato-perturbation})
\begin{equation}\label{eq:proof_propisolatedptI}
\res{\zeta}{A} = \mathcal{R}'(\zeta, A) + \mathcal{R}''(\zeta,A)\,, \quad \forall\, \zeta \in \rho(A)\,,
\end{equation}
where
\begin{equation}\label{eq:proof_propisolatedptII}
\mathcal{R}'(\zeta, A) = \res{\zeta}{A}P\,, \quad \mathcal{R}''(\zeta, A) = \res{\zeta}{A}(\1 - P)\,, \quad \forall\, \zeta \in \rho(A)\,.
\end{equation}
By \cite[(6.23), Ch.~III]{Kato-perturbation} we have additionally,
\begin{equation}\label{eq:proof_propisolatedptIII}
\mathcal{R}''(\zeta, A) = -\frac{1}{2\pi i} \int_\Gamma \res{\zeta'}{A} \, \frac{\rd \zeta'}{\zeta-\zeta'}\,,
\end{equation}
for all $\zeta$ in the interior of $\Gamma$. Therefore, $\mathcal{R}''(\zeta, A)$ has an analytic continuation and is holomorphic inside $\Gamma$, in particular at $\zeta = \lambda$. Given that by hypothesis $\res{\zeta_0}{A} \in \overline{\mathbb{P}(A)}^{\opnorm{\cdot}}$, by Theorem~\ref{th:poly_resolvent_opnorm} we know that $\res{\zeta'}{A} \in \overline{\mathbb{P}(A)}^{\opnorm{\cdot}}$ for all $\zeta' \in \Gamma$. Therefore, given any $\zeta$ in the interior of $\Gamma$ it is immediate that $\frac{1}{\zeta - \zeta'} \res{\zeta'}{A} \in \overline{\mathbb{P}(A)}^{\opnorm{\cdot}}$ for all $\zeta' \in \Gamma$. As the integral in \eqref{eq:proof_propisolatedptIII} is actually the limit in $\opnorm{\cdot}$ of a Riemann sum, we have that each Riemann sum is in $\overline{\mathbb{P}(A)}^{\opnorm{\cdot}}$ and thus the integral itself is also in $\overline{\mathbb{P}(A)}^{\opnorm{\cdot}}$. As such, we have established that $\mathcal{R}''(\zeta, A) \in \overline{\mathbb{P}(A)}^{\opnorm{\cdot}}$ for all $\zeta$ inside $\Gamma$, and in particular for $\zeta = \lambda$.

Suppose that $g \in X$ is such that $Pg = 0$ (and thus $(\1 - P)g = g$). It is clear by the analytic continuation of $\mathcal{R}''(\zeta, A)$ discussed above that
\[\mathcal{R}''(\lambda, A) = \lim_{\zeta \to \lambda} \mathcal{R}''(\zeta, A) = \lim_{\zeta \to \lambda} \res{\zeta}{A}(\1 - P)\,,\]
in $\opnorm{\cdot}$. So, $\mathcal{R}''(\lambda, A) g = \lim_{\zeta \to \lambda} \mathcal{R}''(\zeta, A)g$.

Consider the inverse linear problem $(A - \lambda\1)f = g$. We let $f^\circ = \mathcal{R}''(\lambda, A)g$, from which
\begin{align*}
(A - \lambda\1)\mathcal{R}''(\lambda, A) g &= \lim_{\zeta \to \lambda} (A - \lambda \1)\res{\zeta}{A}g\\
 &= \lim_{\zeta \to \lambda} \big(g + (\zeta - \lambda)\res{\zeta}{A}g\big)\\
 &= g + \lim_{\zeta\to \lambda}(\zeta-\lambda)\res{\zeta}{A}g\\
 &= g + \lim_{\zeta\to \lambda}(\zeta-\lambda) \cdot \lim_{\zeta\to \lambda}\res{\zeta}{A}g\\
 &= g + \lim_{\zeta\to \lambda}(\zeta-\lambda) \cdot \mathcal{R}''(\lambda,  A)g \\
 &= g\,,
\end{align*}
owing to the continuity of $A$ and the fact that $\res{\zeta}{A}(\1-P)g = \res{\zeta}{A}g$. This shows that $f^\circ$ is indeed a solution to the inverse linear problem $(A-\lambda\1)f = g$. Moreover, as $\mathcal{R}''(\lambda, A) \in \overline{\mathbb{P}(A)}^{\opnorm{\cdot}}$, it is clear that $\mathcal{R}''(\lambda, A)g \in \Kc{A}{g}$, i.e., $f^\circ$ is a Krylov solution.
\end{proof}

We show with the following example that though Proposition~\ref{prop:isolated_pt} provides some nice properties of the reduced resolvent that lend themselves to Krylov solvability, the conditions therein are not necessary to guarantee Krylov solvability.

\begin{example}\label{eg:Volterra_op}
We consider the Volterra operator on $X = L^2[0,1]$, $V: X \to X$, $f \mapsto \int_0^x f(y) \, \rd y$. $V$ is a quasi-nilpotent operator, whence it has spectral radius $0$ and therefore the isolated point $\lambda=0$ is the only point in the spectrum (\cite{Kato-perturbation}). The projection operator $P$ is given by 
\[P = -\frac{1}{2\pi i}\int_\Gamma \res{\zeta}{V} \, \rd \zeta\,.\] 
Moreover $V$ has no eigenvalues, and it is also known \cite{CMN-2018_Krylov-solvability-bdd} that for any vector $g$ a polynomial in $x$,
\[\Kc{V}{x^k} = L^2[0,1]\,, \quad \forall\, k \in \N_0\,.\]

The vector $g(x) = x$ is in the range of the operator $V$ thereby ensuring that the inverse linear problem $Vf = g$ is solvable, and moreover Krylov solvable, yet we show that in this case $Pg \neq 0$. Indeed, the analytical form the resolvent is given by
\[\big(\res{\zeta}{V}h\big)(x) = -\zeta^{-1} h(x) - \zeta^{-2}\int_0^x \exp\bigg(\frac{x-y}{\zeta}\bigg)h(y) \, \rd y \,,\]
for $\zeta \in \C \setminus \{0\}\,,\,\, h \in L^2[0,1]$. As we have that
\[Pg = -\frac{1}{2\pi i} \int_\Gamma \res{\zeta}{V}g \,\rd \zeta \,, \quad \forall \, \zeta \in \C \setminus \{0\}\,,\]
for $\Gamma$ a simple closed curve with $0$ in its interior, for $g(x) = x$ we see that
\begin{align*}
Pg &= \frac{1}{2\pi i} \int_\Gamma \zeta^{-1}x + \zeta^{-2} \int_0^x \exp\bigg(\frac{x-y}{\zeta}\bigg) y \, \rd y \, \rd \zeta \\
 &= x + \frac{1}{2\pi i} \int_\Gamma \zeta^{-2} \int_0^x \exp\bigg(\frac{x-y}{\zeta}\bigg) y \, \rd y \, \rd \zeta\,.
\end{align*}
Furthermore,
\[\int_0^x \exp\bigg(\frac{x-y}{\zeta}\bigg) y \, \rd y = -\zeta^2 \bigg(1 + \frac{x}{\zeta}\bigg) + \zeta^2 \exp\bigg(\frac{x}{\zeta}\bigg)\,,\]
whence
\begin{align*}
Pg &= x + \frac{1}{2 \pi i} \bigg(\int_\Gamma -1 - \frac{x}{\zeta} + \exp\bigg(\frac{x}{\zeta}\bigg)\, \rd \zeta \bigg)\\
 &= x - x +  \frac{1}{2\pi i}\int_\Gamma  \exp\bigg(\frac{x}{\zeta}\bigg) \, \rd \zeta\,,
\end{align*}
and by using the Laurent series expansion for $\exp(x/\zeta)$ and the residue formula (see \cite{lang1999-complex}), we obtain
\[(Pg)(x) = \frac{1}{2\pi i}\int_\Gamma  \exp\bigg(\frac{x}{\zeta}\bigg) \, \rd \zeta = x \,,\]
whence $Pg = g$.

This clearly demonstrates that the conditions in Proposition~\ref{prop:isolated_pt}, while informative and intuitive, are not necessary conditions to guarantee the Krylov solvability in such cases of isolated singularities.
\end{example}

This is further highlighted in the following proposition and its following remarks.

\begin{proposition}\label{prop:separatedspectrum_projection}
Let $A : X \to X$ be a closed operator and let $g \in C^{\infty}(A)$. Suppose that $\Gamma \subset \rho(A)$ is a continuous closed, simple curve that separates the spectrum into two parts: $\sigma'(A)$ the part of the spectrum inside $\Gamma$ and $\sigma''(A)$ the part of the spectrum outside $\Gamma$. Suppose further that
\begin{itemize}
	\item[(i)] there exists $\zeta_0 \in \rho(A)$ such that $\res{\zeta_0}{A}g \in \Kc{A}{g}$, and moreover that
	\item[(ii)] $\zeta_0$ is in the connected part of $\rho(A)$ containing $\Gamma$, and
	\item[(iii)] $A \big(\Kc{A}{g} \cap \mathcal{D}(A)\big) \subset \Kc{A}{g}$.
\end{itemize}

Then,
\[Pg = \bigg(-\frac{1}{2\pi i} \int_\Gamma \res{\zeta}{A} \, \rd \zeta \bigg) g \in \Kc{A}{g}\,.\]
Moreover, if $A$ is bounded everywhere defined and $\zeta_0$ is such that $\res{\zeta_0}{A} \in \overline{\mathbb{P}(A)}^{\opnorm{\cdot}}$, then $P \in \overline{\mathbb{P}(A)}^{\opnorm{\cdot}}$.
\end{proposition}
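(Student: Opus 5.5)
Under hypotheses (i)--(iii) we are exactly in the setting of Theorem~\ref{th:resolvent_connected_Ksolv}. Let $\rho(A,\zeta_0)$ be the connected component of $\rho(A)$ containing $\zeta_0$; by (ii) it contains the whole curve $\Gamma$. Theorem~\ref{th:resolvent_connected_Ksolv} then gives $\res{\zeta}{A}g \in \Kc{A}{g}$ for every $\zeta \in \Gamma$. The plan is to pass this pointwise membership through the contour integral defining $P$, using that $\Kc{A}{g}$ is a closed linear subspace of $X$.

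First I would check that the integral makes sense and can be applied to $g$ pointwise. The map $\zeta \mapsto \res{\zeta}{A}$ is holomorphic, hence continuous, in operator norm on the open set $\rho(A)$. The curve $\Gamma$ is compact, so the integral $-\frac{1}{2\pi i}\int_\Gamma \res{\zeta}{A}\,\rd\zeta$ exists as an operator-norm limit of Riemann sums
\[
S_N = -\frac{1}{2\pi i}\sum_{j} \res{\zeta_j}{A}(\zeta_{j+1}-\zeta_j)\,.
\]
A slight care point is that $\Gamma$ is only assumed continuous, not rectifiable. I would handle this either by replacing $\Gamma$ with a homotopic polygonal curve in $\rho(A)$ (Cauchy's theorem for the resolvent makes $P$ independent of this choice), or by interpreting the integral as in \cite{Kato-perturbation}. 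Evaluating at $g$ gives $S_N g \to Pg$ in $X$. Each $S_N g$ is a finite linear combination of the vectors $\res{\zeta_j}{A}g \in \Kc{A}{g}$, so $S_N g \in \Kc{A}{g}$, and closedness yields $Pg \in \Kc{A}{g}$. I expect the only real subtlety to be the regularity of $\Gamma$, which the polygonal replacement settles. After replacement the polygon may leave the original curve, but it stays in the same component $\rho(A,\zeta_0)$, so Theorem~\ref{th:resolvent_connected_Ksolv} still applies at its points.

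For the second part, let $A\in\mathscr{B}(X)$ and $\res{\zeta_0}{A}\in\overline{\mathbb{P}(A)}^{\opnorm{\cdot}}$. I would first rerun the ball-chaining argument from the proof of Theorem~\ref{th:poly_resolvent_opnorm}, starting at $\zeta_0$ rather than at a point exceeding $\mathrm{spr}A$. Since the hypothesis supplies the base case, that proof uses nothing else about the starting point: the induction via Lemma~\ref{lem:Pop_invariant} and the Neumann series \eqref{eq:proof_thresolventconnectedKsolvI} go through unchanged. This gives $\res{\zeta}{A}\in\overline{\mathbb{P}(A)}^{\opnorm{\cdot}}$ for all $\zeta\in\rho(A,\zeta_0)\supset\Gamma$.

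Finally I would repeat the Riemann-sum argument, now in $\mathscr{B}(X)$. Each $S_N$ lies in the closed linear subspace $\overline{\mathbb{P}(A)}^{\opnorm{\cdot}}$, and $S_N\to P$ in operator norm, so $P\in\overline{\mathbb{P}(A)}^{\opnorm{\cdot}}$. This is exactly the reasoning already used in the proof of Proposition~\ref{prop:isolated_pt}.
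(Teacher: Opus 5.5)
Your proposal is correct and follows the paper's proof. Theorem~\ref{th:resolvent_connected_Ksolv} gives $\res{\zeta}{A}g \in \Kc{A}{g}$ on $\Gamma$, and operator-norm convergence of the Riemann sums into the closed subspaces $\Kc{A}{g}$ and $\overline{\mathbb{P}(A)}^{\opnorm{\cdot}}$ finishes both parts; you also make explicit two points the paper passes over, namely the non-rectifiable $\Gamma$ (via a nearby polygon) and rerunning the ball-chaining of Theorem~\ref{th:poly_resolvent_opnorm} from $\zeta_0$, since that theorem is only stated for starting points $\zeta_0 > \mathrm{spr}A$.
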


\begin{remark}\label{rem:purelydiscrete_projection}
If in Proposition~\ref{prop:separatedspectrum_projection} we consider $A$ with a purely discrete spectrum, then there is only a single connected part of $\rho(A)$. In this case we may remove the need for assumption (ii) in the hypothesis of the proposition.

We also note that for the special case of $A$ being bounded and everywhere defined, hypothesis (iii) is guaranteed by the continuity of $A$.
\end{remark}

\begin{remark}\label{rem:quasinilpotent_P}
We remark that in the case of a (quasi-)nilpotent operator $A$ (i.e., $\mathrm{spr} A = 0$), the conditions (i), (ii), and (iii) of Proposition~\ref{prop:separatedspectrum_projection} are always satisfied for all $\zeta_0 \neq 0$. Yet we see that, although the proposition guarantees that $P \in \overline{\mathbb{P}(A)}^{\opnorm{\cdot}}$, by using the Laurent series for the resolvent in \eqref{eq:res_laurentseries} in the proof of Theorem~\ref{th:poly_resolvent_opnorm}, we obtain
\[\int_\Gamma \res{\zeta}{A} \, \rd \zeta = -2\pi i \1\,,\]
by an application of the residue formula for complex integrals. Therefore, $P = \1$ for (quasi-)nilpotent operators.
\end{remark}

\begin{proof}[Proof of Proposition~\ref{prop:separatedspectrum_projection}]
By Theorem~\ref{th:resolvent_connected_Ksolv}, for all $\zeta \in \Gamma$ we have $\res{\zeta}{A}g \in \Kc{A}{g}$. As the integral for $P$ is actually an operator norm convergent Riemann summation, we have that $Pg \in \Kc{A}{g}$. Indeed,
\begin{equation}\label{eq:proof_separatedspectrumprojection}
\lim_{n \to \infty} \opnorm{\sum_{i=1}^n \res{\zeta_i}{A} \Delta\zeta_i - \int_\Gamma \res{\zeta}{A} \, \rd \zeta} = 0\,, \tag{\ddag}
\end{equation}
for $\{\zeta_i\}_{i=1}^n$ a partition of $\Gamma$, whence
\begin{align*}
\lim_{n \to \infty} & \norm{\sum_{i=1}^n \res{\zeta_i}{A} \Delta\zeta_i g - \bigg(\int_\Gamma \res{\zeta}{A} \, \rd \zeta\bigg)g}{X} \\ 
& \leqslant \lim_{n \to \infty} \opnorm{\sum_{i=1}^n \res{\zeta_i}{A} \Delta\zeta_i - \int_\Gamma \res{\zeta}{A} \, \rd \zeta} \norm{g}{X}\\
 & = 0\,.
\end{align*}
As the Riemann summation $\sum_{i=1}^n \res{\zeta_i}{A} \Delta\zeta_i g \in \Kc{A}{g}$ for all $n \in \N$ as $\res{\zeta_i}{A} \Delta\zeta g \in \Kc{A}{g}$ for all $\zeta_i$, the result follows.

If, additionally we assume that $A$ is bounded and that $\res{\zeta_0}{A} \in \overline{\mathbb{P}(A)}^{\opnorm{\cdot}}$, then by \eqref{eq:proof_separatedspectrumprojection} and Theorem~\ref{th:poly_resolvent_opnorm} the result follows.
\end{proof}

\begin{corollary}\label{cor:compact_projection}
Let $A:X \to X$ be a compact operator on the Banach space $X$ and let $\lambda \in \sigma(A)$ be any non-zero eigenvalue. Then $P_\lambda \in  \overline{\mathbb{P}(A)}^{\opnorm{\cdot}}$ and also the nilpotent operator $D_\lambda \in  \overline{\mathbb{P}(A)}^{\opnorm{\cdot}}$, where
\[P_\lambda =  -\frac{1}{2\pi i} \int_{\Gamma_\lambda} \res{\zeta}{A} \, \rd \zeta\,, \quad D_\lambda = (A - \lambda\1)P_\lambda\,,\]
for $\Gamma_\lambda \subset \rho(A)$ a continuous simple closed curve with only $\lambda$ in its interior.
\end{corollary}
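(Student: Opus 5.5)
The plan is to reduce to the bounded case of Proposition~\ref{prop:separatedspectrum_projection}, using Theorem~\ref{th:poly_resolvent_opnorm} to supply the required $\zeta_0$.

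\textbf{Step 1: structure of $\rho(A)$.} Since $A$ is compact, $\sigma(A)$ is countable and its only possible accumulation point is $0$. A countable compact subset of $\C$ does not disconnect the plane, so $\rho(A)=\C\setminus\sigma(A)$ is connected. I would justify this briefly: any two points of $\rho(A)$ can be joined by a polygonal path, because only countably many directions or radii are blocked by the countable set $\sigma(A)$. Consequently $\rho(A)$ has a single connected component, as in Remark~\ref{rem:purelydiscrete_projection}.

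\textbf{Step 2: verify the hypotheses of Proposition~\ref{prop:separatedspectrum_projection}.} Choose $\Gamma_\lambda$ as a small circle around $\lambda$ in $\rho(A)$ enclosing no other spectral point; this is possible because $\lambda\neq0$ is isolated. Then:
\begin{itemize}
\item Take any real $\zeta_0>\mathrm{spr}A$. Theorem~\ref{th:poly_resolvent_opnorm} gives $\res{\zeta_0}{A}\in\overline{\mathbb{P}(A)}^{\opnorm{\cdot}}$.
\item By Step 1, $\zeta_0$ lies in the same component of $\rho(A)$ as $\Gamma_\lambda$.
\item Since $A$ is bounded, hypothesis (iii) is automatic, and hypothesis (i) holds for every $g$.
\end{itemize}
The operator-norm part of Proposition~\ref{prop:separatedspectrum_projection} then yields $P_\lambda\in\overline{\mathbb{P}(A)}^{\opnorm{\cdot}}$. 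Alternatively, Theorem~\ref{th:poly_resolvent_opnorm} gives directly $\res{\zeta}{A}\in\overline{\mathbb{P}(A)}^{\opnorm{\cdot}}$ for all $\zeta\in\Gamma_\lambda$. The Riemann sums of the contour integral then lie in this closed subspace, and so does their operator-norm limit.

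\textbf{Step 3: the nilpotent part.} We have $D_\lambda=(A-\lambda\1)P_\lambda$. Lemma~\ref{lem:Pop_invariant} with $k=1$ shows that $\overline{\mathbb{P}(A)}^{\opnorm{\cdot}}$ is invariant under left multiplication by $A-\lambda\1$. Hence $D_\lambda\in\overline{\mathbb{P}(A)}^{\opnorm{\cdot}}$.

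That $D_\lambda$ is nilpotent is standard and not part of what needs proving. It follows because $P_\lambda$ has finite rank for compact $A$ and $\lambda\neq0$, so $\sigma(A|_{\ran P_\lambda})=\{\lambda\}$ on a finite-dimensional space. I would just cite \cite{Kato-perturbation} for this.

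\textbf{Main obstacle.} The only nontrivial point is the connectedness of $\rho(A)$ in Step 1. It is what lets $\zeta_0>\mathrm{spr}A$ reach $\Gamma_\lambda$, and I would state the countable-compact-set argument explicitly. Everything else is a direct application of the earlier results.
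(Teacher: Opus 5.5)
Your proposal is correct and follows the paper's proof: Theorem~\ref{th:poly_resolvent_opnorm} supplies $\zeta_0>\mathrm{spr}A$, the connectedness of $\rho(A)$ lets you apply Proposition~\ref{prop:separatedspectrum_projection} to get $P_\lambda$, and Lemma~\ref{lem:Pop_invariant} handles $D_\lambda$. Your explicit argument that the countable compact set $\sigma(A)$ cannot disconnect the plane is a more careful justification than the paper's appeal to Remark~\ref{rem:purelydiscrete_projection}, because $0$ may be a non-isolated point of $\sigma(A)$ and the spectrum is then not purely discrete.
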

\begin{proof}
We note that for $\zeta_0 > \mathrm{spr}A$ we have that $\res{\zeta_0}{A} \in \overline{\mathbb{P}(A)}^{\opnorm{\cdot}}$ by Theorem~\ref{th:poly_resolvent_opnorm}. Moreover, by Remark~\ref{rem:purelydiscrete_projection} the resolvent set $\rho(A)$ is a single connected set. Thus, by application of Proposition~\ref{prop:separatedspectrum_projection} we have that $P_\lambda \in \overline{\mathbb{P}(A)}^{\opnorm{\cdot}}$. The fact that $D_\lambda \in \overline{\mathbb{P}(A)}^{\opnorm{\cdot}}$ then follows from Lemma~\ref{lem:Pop_invariant}.
\end{proof}

\bibliographystyle{siam}
\def\cprime{$'$}

\end{document}